\documentclass[leqno,11pt]{amsart}

\usepackage[colorlinks,hypertexnames=false]{hyperref}
\hypersetup{
  allcolors=blue,
  pdftitle={The large mass limit of monopoles: abelian limits and Dirac singularities},
  pdfauthor={Daniel Fadel},
  pdfkeywords={magnetic monopoles, asymptotically conical manifolds, Dirac singularities, abelianization}
}
\usepackage{amsmath,amsthm,amssymb,mathrsfs,mathtools}
\usepackage[alphabetic]{amsrefs}
\usepackage[T1]{fontenc}
\usepackage[utf8]{inputenc}
\usepackage[english]{babel}
\usepackage{enumitem}

\numberwithin{equation}{section}

\let\originalleft\left
\let\originalright\right
\renewcommand{\left}{\mathopen{}\mathclose\bgroup\originalleft}
\renewcommand{\right}{\aftergroup\egroup\originalright}

\makeatletter
\renewcommand*{\eqref}[1]{%
    \hyperref[{#1}]{\textup{\tagform@{\ref*{#1}}}}}
\makeatother

\newtheorem{theorem}{Theorem}[section]
\newtheorem{Mtheorem}{Theorem}

\newtheorem{proposition}[theorem]{Proposition}
\newtheorem{lemma}[theorem]{Lemma}
\newtheorem{corollary}[theorem]{Corollary}
\theoremstyle{definition}

\newtheorem{problem}[theorem]{Problem}
\theoremstyle{remark}
\newtheorem{remark}[theorem]{Remark}

\newcommand{\SU}{\mathrm{SU}}
\newcommand{\U}{\mathrm U}
\newcommand{\vol}{\mathrm{vol}}
\newcommand{\Vol}{\mathrm{Vol}}
\newcommand{\supp}{\operatorname{spt}}
\newcommand{\dist}{\operatorname{dist}}
\newcommand{\tr}{\operatorname{tr}}
\newcommand{\cZ}{\mathcal Z}
\newcommand{\cS}{\mathcal S}
\newcommand{\loc}{\mathrm{loc}}

\newcommand{\norm}[1]{\left\lVert #1\right\rVert}
\newcommand{\abs}[1]{\left\lvert #1\right\rvert}
\newcommand{\ip}[2]{\left\langle #1,#2\right\rangle}

\title[The large mass limit of monopoles]
{The large mass limit of monopoles:\\
abelian limits and Dirac singularities}

\author{Daniel Fadel}
\address{Instituto de Ci\^encias Matem\'aticas e de Computa\c{c}\~ao,
Universidade de S\~ao Paulo, S\~ao Carlos, Brazil}
\email{daniel.fadel@icmc.usp.br}

\subjclass[2020]{53C07, 58J05, 58J37, 81T13}
\keywords{Magnetic monopoles, asymptotically conical $3$-manifolds,
large mass limit, abelianization, Dirac singularities, concentration}
\begin{document}

\begin{abstract}
Let $(A_i,\Phi_i)$ be finite energy $\SU(2)$ monopoles of charge $k>0$ on
an asymptotically conical $3$-manifold with one end, with masses
$m_i\to\infty$. After passing to a subsequence, the mass-renormalized
energy measures concentrate at finitely many points $x_a$ with concentration
weights $4\pi K_a$, where $K_a$ is the total charge of the complete finite cluster
of mass-one Euclidean monopoles lying over $x_a$. We prove that, on the complement $M$ of these points, the fields
abelianize exponentially. After translating the Higgs fields by their
masses along the unit Higgs directions and applying gauge transformations,
the translated pairs converge smoothly locally to a reducible monopole
$(A_\infty,\Phi_\infty)$ of the form
\[
    \Phi_\infty=-u\Psi_\infty,
    \qquad
    F_{A_\infty}=-*du\,\Psi_\infty,
    \qquad
    u=4\pi\sum_aK_aG(\,\cdot\,,x_a),
\]
where $\Psi_\infty$ is a parallel unit section and $G$ is the minimal
positive Green function. Consequently, $x_a$ is a Dirac singularity of
charge $K_a$. The singular part of the residual limit is determined by the
weighted $0$-cycle of concentration points and total cluster charges, and
does not retain the individual Euclidean profiles or their separation
hierarchy. We also show that $k-\sum_aK_a$ is exactly the charge
escaping through the asymptotically conical end, and describe the residual
flat abelian ambiguity.
\end{abstract}

\maketitle

{\hypersetup{hidelinks}\tableofcontents}

\section{Introduction}

This paper continues the study of the large mass limit $m_i\to\infty$ of finite energy monopoles on asymptotically conical $3$-manifolds initiated in the joint work \cite{fadel2019limit}. Throughout, a monopole means an $\SU(2)$ solution of the three-dimensional Bogomolny equation \eqref{eq:bogomolny}. The higher-dimensional $\mathrm G_2$ and Calabi--Yau monopole equations are mentioned below only for comparison.

The large mass limit has two distinguished levels of description. Near each
point of the finite concentration set $\cS$ of the sequence, moving-centre
rescalings at the natural scale $m_i^{-1}$ produce a finite cluster of
mass-one Euclidean monopoles. At the original scale, away from $\cS$, the
monopoles abelianize and, after translating the Higgs fields by their masses
along the unit Higgs directions, converge to a monopole with Dirac
singularities at the points of $\cS$.
The purpose of the present paper is to identify this residual field more
precisely: its scalar part is the Green potential of the concentration cycle,
and its Dirac charges are the total charges of the corresponding Euclidean
clusters.

\subsection{Geometric setting and concentration input}\label{subsec:context}

Let $(X^3,g)$ be a complete, connected, oriented Riemannian $3$-manifold,
and let $P\to X$ be a principal $\SU(2)$-bundle. We denote by
$\mathfrak g_P:=P\times_{\mathrm{Ad}}\mathfrak{su}(2)$ the adjoint bundle,
by $\mathscr A(P)$ the space of smooth connections on $P$, and use the
$\mathrm{Ad}$-invariant inner product
$\ip{a}{b}=-2\tr(ab)$ on $\mathfrak{su}(2)$. The Yang--Mills--Higgs
energy of a connection $A\in\mathscr A(P)$ and a Higgs field
$\Phi\in\Gamma(\mathfrak g_P)$ is
\[
    \mathscr E_X(A,\Phi)
    :=
    \frac12\int_X
    \left(\abs{F_A}^2+\abs{\nabla_A\Phi}^2\right)\vol_g,
\]
and its finite energy configuration space is
\begin{equation}\label{eq:finite-energy-configuration-space}
    \mathscr C(P)
    :=
    \left\{
       (A,\Phi)\in\mathscr A(P)\times\Gamma(\mathfrak g_P):
       \abs{F_A},\abs{\nabla_A\Phi}\in L^2(X)
    \right\}.
\end{equation}
A finite energy monopole is a configuration $(A,\Phi)\in\mathscr C(P)$
satisfying the Bogomolny equation
\begin{equation}\label{eq:bogomolny}
    F_A=*\nabla_A\Phi.
\end{equation}
By the Bianchi identity, monopoles are critical points of
$\mathscr E_X$.

We assume throughout that $(X,g)$ is asymptotically conical (AC) with one end. Thus, for some compact set $K\subset X$, a closed connected oriented
Riemannian surface $(\Sigma,g_\Sigma)$, and an orientation-preserving
diffeomorphism
\[
    \varphi:(R_0,\infty)\times\Sigma\longrightarrow X\setminus K,
\]
the metric $\varphi^*g$ is asymptotic to the cone metric
$g_C=d\rho^2+\rho^2g_\Sigma$ in the sense that
\[
    \abs{\nabla_C^j(\varphi^*g-g_C)}_{g_C}
    =O(\rho^{-\nu-j})
    \qquad\text{for every }j\geqslant0
\]
for some $\nu>0$. We extend the cone coordinate to a radius function
$\rho$ on $X$ and write $\Sigma_R:=\{\rho=R\}$ for its large level
surfaces.

By \cite{fadel2023asymptotics}*{Theorem~1.1}, every
$(A,\Phi)\in\mathscr C(P)$ determines a unique number $m\in[0,\infty)$ such that
$m-\abs\Phi\in L^6(X)$. If $(A,\Phi)$ is a monopole, then
\[
    \lim_{\rho\to\infty}\abs\Phi=m,
    \qquad
    \abs\Phi\leqslant m\quad\text{on }X,
\]
and $m$ is called its mass. When $m>0$, the charge
\[
    k
    :=
    \frac{1}{4\pi m}
    \int_X\left\langle F_A\wedge\nabla_A\Phi\right\rangle
\]
is an integer. In the monopole case, for every sufficiently large $R$,
the normalized Higgs field $\Phi/\abs\Phi$ on $\Sigma_R\cong\Sigma$ determines a homotopy
class of maps $\Sigma\to\mathbb S^2\subset\mathfrak{su}(2)$ whose Brouwer
degree is $k$. Finally, the Bogomolny decomposition gives
\begin{equation}\label{eq:general-energy-formula}
    \mathscr E_X(A,\Phi)
    =
    \pm4\pi mk
    +\frac12\norm{F_A\mp*\nabla_A\Phi}_{L^2(X)}^2.
\end{equation}
Consequently,
$\mathscr E_X(A,\Phi)\geqslant4\pi m\abs{k}$, and monopoles of mass
$m>0$ and charge $k>0$ are precisely the absolute minimizers among
configurations with the same $m$ and $k$; in particular,
$\mathscr E_X(A,\Phi)=4\pi mk$ for such a monopole.

The concentration analysis initiated in the published article
\cite{fadel2019limit} is used below in the corrected and expanded form
given in arXiv v5 of the same work
\cite{fadeloliveira2026limitv5}. Whenever the two versions differ, our
references below are to the latter. The point relevant here is that
several monopole cores at the natural scale $m_i^{-1}$ may converge to the
same point of $X$ while separating in the rescaled coordinates. The
concentration weight at $x_a$ in the limiting mass-renormalized energy measure
is therefore $4\pi K_a$, where $K_a$ is the total charge of the complete
cluster over $x_a$; it need not equal the charge of any single pointed
Euclidean monopole limit. We state the precise input needed in this paper.

\begin{theorem}[Concentration and cluster decomposition {\cite[Theorem 1.1]{fadeloliveira2026limitv5}}]
\label{thm:concentration-clusters}
Let $(A_i,\Phi_i)\in\mathscr C(P)$ be monopoles of charge $k>0$ and
masses $m_i\to\infty$. Set
\[
    e_i
    :=
    \frac12\left(
        \abs{F_{A_i}}^2+\abs{\nabla_{A_i}\Phi_i}^2
    \right)
    =
    \abs{\nabla_{A_i}\Phi_i}^2
    =
    \abs{F_{A_i}}^2,
    \qquad
    \mu_i:=m_i^{-1}e_i\vol_g.
\]
Thus $\mu_i$ is the mass-renormalized energy measure of
$(A_i,\Phi_i)$. By the energy formula
\eqref{eq:general-energy-formula},
\begin{equation}\label{eq:total-mass-mu}
    \mu_i(X)=4\pi k.
\end{equation}

After passing to a subsequence, still denoted by the same indices, suppose
that
\[
    \mu_i\rightharpoonup\mu
\]
as Radon measures. Fix a uniform geometric radius $\rho_0>0$, no larger
than the regularity radius in
\cite{fadeloliveira2026limitv5}*{Theorem~5.1}, and define, for this
subsequence,
\begin{equation}\label{eq:main-concentration-zero-sets}
\begin{split}
    \cS
    &:=
    \bigcap_{0<r\leqslant\rho_0}
    \left\{
       x\in X:
       \liminf_{i\to\infty}
       \mu_i\bigl(B_r(x)\bigr)\geqslant4\pi
    \right\},
    \\
    \cZ
    &:=
    \bigcap_{N\geqslant1}
    \overline{\bigcup_{i\geqslant N}\Phi_i^{-1}(0)}.
\end{split}
\end{equation}
Then
\[
    \cS=\cZ=\{x_1,\ldots,x_\ell\}
\]
is finite, possibly empty.

For every $a\in\{1,\ldots,\ell\}$, there exist an integer
$N_a\geqslant1$, sequences of zeros
\[
    p_{i,a,\beta}\in\Phi_i^{-1}(0),
    \qquad
    p_{i,a,\beta}\longrightarrow x_a,
    \qquad
    1\leqslant\beta\leqslant N_a,
\]
and mass-one Euclidean monopoles
$(A_{a,\beta},\Phi_{a,\beta})$ of charges
$q_{a,\beta}\in\mathbb Z_{>0}$, obtained by rescaling about these
centres at the mass scale. Distinct centres separate at that scale:
\[
    m_i\,d(p_{i,a,\beta},p_{i,a,\gamma})
    \longrightarrow\infty
    \qquad(\beta\neq\gamma).
\]
If
\[
    K_a:=\sum_{\beta=1}^{N_a}q_{a,\beta},
\]
then
\begin{equation}\label{eq:cluster-energy-intro}
    \mu(\{x_a\})
    =
    4\pi K_a
    =
    \sum_{\beta=1}^{N_a}
    \mathscr E_{\mathbb R^3}(A_{a,\beta},\Phi_{a,\beta}),
    \qquad
    1\leqslant N_a\leqslant K_a.
\end{equation}
Consequently,
\begin{equation}\label{eq:measure-intro}
    \mu_i\rightharpoonup\mu
    =
    4\pi\sum_{a=1}^{\ell}K_a\delta_{x_a},
    \qquad
    \sum_{a=1}^{\ell}K_a\leqslant k.
\end{equation}
In particular,
\[
    \supp\mu=\cS=\cZ.
\]
\end{theorem}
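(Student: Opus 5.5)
This theorem is quoted from \cite{fadeloliveira2026limitv5}, so the plan reconstructs its proof from two ingredients. The first is an $\varepsilon$-regularity estimate at the mass scale. The second is a bubble-tree (cluster) decomposition with no energy loss in the neck regions.

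\emph{Step 1: $\varepsilon$-regularity.} I would first prove the regularity statement, which I take to be the content of \cite{fadeloliveira2026limitv5}*{Theorem~5.1}. Using the Bogomolny equation, $e_i=\abs{\nabla_{A_i}\Phi_i}^2$ satisfies a Bochner inequality $\Delta e_i\leqslant C(e_i+e_i^{3/2})$. A scale-invariant smallness argument then gives the following: if $m_i^{-1}\int_{B_r(x)}e_i<4\pi$ for some $r\leqslant\rho_0$, then $\abs{\Phi_i}\geqslant c\,m_i$ on $B_{r/2}(x)$ for large $i$. In particular $\Phi_i$ has no zeros there.

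The threshold $4\pi$ comes from the Euclidean energy quantization $\mathscr E_{\mathbb R^3}\geqslant4\pi$ for any nontrivial mass-one monopole. I would argue by contradiction: rescale about a zero by $m_i$, use uniform bounds $\abs{\Phi_i}\leqslant m_i$ and the bound on $e_i$, and apply Uhlenbeck compactness. This produces a mass-one Euclidean monopole with a zero, hence nonzero charge, hence energy at least $4\pi$.

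This yields $\cZ\subseteq\cS$. Conversely, away from $\cZ$ we have $\abs{\Phi_i}\geqslant c\,m_i$. A Kato/maximum-principle argument then gives exponential decay $e_i\lesssim m_i^2e^{-cm_i d}$ of the nonabelian part together with the abelian bound $\mu_i\ll$ small. This gives $\cS\subseteq\cZ$. Finiteness follows because each point of $\cS$ carries mass at least $4\pi$ while $\mu_i(X)=4\pi k$, so $\ell\leqslant k$.

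\emph{Step 2: cluster extraction.} For each $x_a$ I would run an iterative selection inside $B_{\rho_0}(x_a)$. Pick a zero $p_{i,a,1}\to x_a$ and rescale by $m_i$ to get a limit $(A_{a,1},\Phi_{a,1})$ of charge $q_{a,1}\geqslant1$. If some remaining energy at least $4\pi$ (in the $\liminf$) sits at distance $\gg m_i^{-1}$ from all previously chosen centres, Step 1 provides a new zero there, and I rescale again. Each new bubble takes at least $4\pi$ of the total $4\pi k$, so the process stops after at most $K_a$ steps. This gives $N_a\leqslant K_a$ and the separation property $m_i\,d(p_{i,a,\beta},p_{i,a,\gamma})\to\infty$.

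\emph{Step 3: no neck energy (the main obstacle).} The key claim is that $\mu(\{x_a\})$ equals exactly $\sum_\beta\mathscr E(A_{a,\beta},\Phi_{a,\beta})$. Nothing may be lost in the annuli $Rm_i^{-1}\leqslant d(\cdot,p_{i,a,\beta})\leqslant\delta$, or in the intermediate scales between clusters of centres.

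I would handle this with the Bogomolny energy identity on domains. On a region $\Omega$ where $\abs{\Phi_i}>0$, the energy equals the boundary flux $\int_{\partial\Omega}\langle\Phi_i,F_{A_i}\rangle$. This flux is $4\pi\abs{\Phi_i}\times(\text{degree of }\Phi_i/\abs{\Phi_i})$ plus an error controlled by the nonabelian part, and that error decays exponentially by Step 1.

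Choose the neck $\Omega$ to be $B_\delta(x_a)$ minus small balls around the centres. Then:
\begin{itemize}[label={}]
\item $\abs{\Phi_i}\approx m_i$ on the inner spheres, so each inner boundary contributes the Euclidean energy $4\pi q_{a,\beta}$;
\item on the outer sphere, the degree is an integer $K_a'$, so the flux is $4\pi K_a'\,m_i$ up to $o(m_i)$.
\end{itemize}
The neck energy divided by $m_i$ is then $4\pi(K_a'-\sum_\beta q_{a,\beta})+o(1)$. Degree additivity on $\Omega$, where $\Phi_i\neq0$, forces $K_a'=\sum_\beta q_{a,\beta}$. The neck energy is therefore $o(m_i)$, and the weight at $x_a$ is $4\pi K_a$.

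Nested scales are treated inductively using the same flux argument on each annulus of the tree. The delicate point is the uniform lower bound $\abs{\Phi_i}\geqslant(1-o(1))m_i$ on the outer sphere. I would obtain it from harmonicity of $\abs{\Phi_i}$ up to exponentially small errors, compared against $m_i-C\,G$.

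Finally, $\mu(X\setminus\cS)=0$ by the decay in Step 1, which gives $\mu=4\pi\sum_aK_a\delta_{x_a}$. Since $\mu(X)\leqslant\liminf_i\mu_i(X)=4\pi k$ (mass can escape to the end), we get $\sum_aK_a\leqslant k$.
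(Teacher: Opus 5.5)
The paper does not prove this statement; it imports it from \cite{fadeloliveira2026limitv5}, so I judge your outline on its own. The architecture is reasonable, but the step carrying the real content is asserted rather than proved. That step is that every rescaled limit has mass exactly one, equivalently that $\abs{\Phi_i}/m_i\to1$ on the inner spheres $\partial B_{R/m_i}(p_{i,a,\beta})$ in the iterated limit ($i\to\infty$, then $R\to\infty$). Rescaling and Uhlenbeck compactness only give a finite-energy Euclidean monopole with $\abs{\phi}\leqslant1$, hence of some mass $m_{a,\beta}\in[0,1]$ and energy $4\pi m_{a,\beta}q_{a,\beta}$. Nothing local prevents the Higgs field from sagging at scales between $R/m_i$ and $\delta$. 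If $m_{a,\beta}<1$, the $4\pi$ threshold of Step~1 is lost, and in Step~3 the inner fluxes are only $4\pi m_{a,\beta}q_{a,\beta}$. Outer flux plus degree additivity still give $\mu(\{x_a\})=4\pi K_a$, but the neck keeps $4\pi\sum_\beta(1-m_{a,\beta})q_{a,\beta}$, and the middle equality in \eqref{eq:cluster-energy-intro} fails. So the delicate place is the inner spheres, not the outer one.

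The missing tool is the exact Green representation \eqref{eq:Green-representation}, i.e.\ $1-\abs{\Phi_i}^2/m_i^2=2m_i^{-1}\int_XG(\cdot,y)\,d\mu_i(y)$. Split it at rescaled distance $s$. The far part is at most $Ck/s$ by \eqref{eq:Green-estimates} and $\mu_i(X)=4\pi k$. The near part converges to the Newtonian potential of $2e_{a,\beta}$, which vanishes at infinity. This yields mass one and the inner-sphere bound.

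The same identity, via $u_i\leqslant v_i=2\int G\,d\mu_i$, gives the bound on $u_i$ you need on the outer sphere. A comparison with $m_i-CG$ would instead require a lower bound for the subharmonic function $\abs{\Phi_i}$, which the maximum principle does not supply without boundary control near the clusters.

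It also removes the diffuse part of $\mu$ off $\cS$, which your Step~1 does not handle. Kato-type decay controls only the transverse energy, whereas the longitudinal part $m_i^{-1}\abs{du_i}^2$ is massless, since $u_i$ is only almost harmonic. The fix runs as follows. First, rescaling about bad points shows $e_i=o(m_i^4)$ and $\abs{\Phi_i}\geqslant m_i/4$ near $x\notin\cZ$; this again uses mass one, since a trivial limit of mass below one has no zeros. Next, Bochner absorption and the mean value inequality, as in Proposition~\ref{prop:local-clearing}, give $m_i^{-1}e_i\leqslant C$ there. Then $dv_i=2\int d_xG\,d\mu_i$ gives $\abs{du_i}\leqslant C$, so $m_i^{-1}\abs{du_i}^2\to0$.

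Two further points need repair. First, the $\varepsilon$-regularity statement in Step~1 is false as written. A charge-one BPS monopole of mass $m_i$ centred at $x\in\mathbb R^3$ has everywhere positive energy density, so $\mu_i(B_r(x))<\mu_i(\mathbb R^3)=4\pi$ for all $r$ and $i$, yet $\Phi_i(x)=0$. The rescaling argument needs a margin, $\limsup_i\mu_i(B_r(x))<4\pi$. For $x\notin\cS$ you get this from weak convergence on a smaller ball with $\mu(\partial B_{r'}(x))=0$. Also, the bound $e_i\leqslant Cm_i^4$ used for compactness comes from small-energy regularity at radius comparable to $m_i^{-1}$, not from a $4\pi$ threshold at a fixed radius.

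Second, degree additivity in Step~3 needs $\Omega=B_\delta(x_a)\setminus\bigcup_\beta B_{R/m_i}(p_{i,a,\beta})$ to be zero-free for large $i$. Your energy-based stopping rule does not ensure this. You must keep extracting, along subsequences, until every sequence of zeros tending to $x_a$ stays at bounded rescaled distance from some centre. Termination then follows because each mass-one bubble costs at least $4\pi$ of $\mu(\{x_a\})\leqslant4\pi k$. Your bound of ``at most $K_a$ steps'' is circular, since $K_a$ is defined by the extraction. With these repairs, your single flux identity on $\Omega$ already covers all intermediate scales, and no induction over the tree is needed.
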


The stronger complete-cluster statement
\cite{fadeloliveira2026limitv5}*{Proposition~8.3} also gives a local
no-loss identity: after removing large mass-scale balls around the
moving centres, no mass-renormalized energy remains near the
concentration point in the iterated limit. Thus the profiles above
exhaust not only the total cluster charge $K_a$ but also the full
concentration weight $4\pi K_a$. We shall not need this stronger
quantitative statement below.

The possible deficit in \eqref{eq:measure-intro} is mass-renormalized energy
escaping through the AC end. We describe the complementary behaviour on
\[
    M:=X\setminus\cS.
\]
Although the individual zero sets move with $i$, the equality
$\cS=\cZ$ immediately gives the following local consequence.

\begin{lemma}[Absence of zeros on compact subsets]\label{lem:zero-free}
For every compact $K\Subset M$, one has
$K\cap\Phi_i^{-1}(0)=\varnothing$ for all sufficiently large $i$.
\end{lemma}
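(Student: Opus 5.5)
The plan is a direct compactness argument by contradiction, using only the identity $\cS=\cZ$ from Theorem~\ref{thm:concentration-clusters} and the definition of $\cZ$ in \eqref{eq:main-concentration-zero-sets}. Fix a compact $K\Subset M=X\setminus\cS$. Because $\cS=\cZ$, we have $K\cap\cZ=\varnothing$. We want to show that $K\cap\Phi_i^{-1}(0)=\varnothing$ for all large $i$.

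Suppose this fails. Then, after passing to a further subsequence $i_j\to\infty$, there are points $y_j\in K\cap\Phi_{i_j}^{-1}(0)$. Since $K$ is compact, we may pass to yet another subsequence and assume $y_j\to y\in K$. We then check that $y\in\cZ$. Fix $N\geqslant1$. For all $j$ with $i_j\geqslant N$ we have $y_j\in\bigcup_{i\geqslant N}\Phi_i^{-1}(0)$, so the limit $y$ lies in the closure $\overline{\bigcup_{i\geqslant N}\Phi_i^{-1}(0)}$. This holds for every $N$, hence $y\in\cZ=\cS$. That contradicts $y\in K\subset M$.

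The one point needing care is bookkeeping of subsequences. The sets $\cS$ and $\cZ$ are defined for the fixed subsequence along which $\mu_i\rightharpoonup\mu$, and the lemma is stated for that same sequence. A further subsequence $i_j$ of it still has all its zeros inside $\bigcup_{i\geqslant N}\Phi_i^{-1}(0)$ for $i_j\geqslant N$. So passing to sub-subsequences inside the argument does not change which $\cZ$ is being used, and the contradiction is with the same set $\cS=\cZ$. Beyond this check there is no real obstacle: all the analytic content sits in the identity $\cS=\cZ$, which we take from Theorem~\ref{thm:concentration-clusters}.
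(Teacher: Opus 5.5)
Your proposal is correct and follows essentially the same route as the paper. The paper argues that a sequence of zeros in $K$ would have an accumulation point in $K$, and that point would lie in $\cZ=\cS$, which is a contradiction; you have written out the same compactness argument in full, including the subsequence bookkeeping.
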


\begin{proof}
Otherwise, a sequence of zeros in $K$ would have an accumulation point in
$K$, contrary to \eqref{eq:main-concentration-zero-sets} and
Theorem~\ref{thm:concentration-clusters}.
\end{proof}

\subsection{Main result}\label{subsec:main_results}

Let $G$ be the minimal positive Green function of $X$. The scalar Higgs
defect has an exact Green representation, and the measure convergence
\eqref{eq:measure-intro} determines its limit.

\begin{Mtheorem}[Abelian limit and Dirac singularities]\label{thm:main}
Let $(X^3,g)$ be an oriented AC manifold with one end, let $P\to X$ be a
principal $\SU(2)$-bundle, and let
$(A_i,\Phi_i)\in\mathscr C(P)$ be monopoles of charge $k>0$ and
masses $m_i\to\infty$. Pass to a subsequence for which
Theorem~\ref{thm:concentration-clusters} holds, and write
\[
    \cS=\{x_1,\ldots,x_\ell\},
    \qquad
    M=X\setminus\cS.
\]
Then the following statements hold.
\begin{enumerate}[label=\textnormal{(\roman*)}]
\item For every compact $K\Subset M$, one has $\Phi_i\neq0$ on $K$ for
all sufficiently large $i$. On such a set define
\[
    \Psi_i:=\frac{\Phi_i}{\abs{\Phi_i}},
    \qquad
    u_i:=m_i-\abs{\Phi_i},
    \qquad
    \widetilde\Phi_i:=\Phi_i-m_i\Psi_i=-u_i\Psi_i.
\]
Then
\begin{equation}\label{eq:scalar-main}
    u_i\longrightarrow
    u:=4\pi\sum_{a=1}^{\ell}K_aG(\,\cdot\,,x_a)
    \qquad\text{in }C^\infty_{\loc}(M).
\end{equation}

\item The sequence becomes locally abelian on $M$. More precisely, for
every $K\Subset M$ and every $j\geqslant0$, there are
$c_{K,j},C_{K,j}>0$ such that
\begin{equation}\label{eq:transverse-main}
    m_i\norm{\nabla_{A_i}\Psi_i}_{C^j(K)}
    +\norm{(F_{A_i})^\perp}_{C^j(K)}
    +\norm{(\nabla_{A_i}\Phi_i)^\perp}_{C^j(K)}
    \leqslant C_{K,j}e^{-c_{K,j}m_i}.
\end{equation}
Here $\perp$ denotes projection onto
$\langle\Psi_i\rangle^\perp\subset\mathfrak g_P$.

\item After passing to a further subsequence and applying gauge
transformations along a compact exhaustion of $M$, the translated pairs
$(A_i,\widetilde\Phi_i)$ converge smoothly locally to a reducible monopole
$(A_\infty,\Phi_\infty)$ on $M$. There is a parallel unit section
$\Psi_\infty$ such that
\begin{equation}\label{eq:limit-main}
    \Phi_\infty=-u\Psi_\infty,
    \qquad
    \nabla_{A_\infty}\Psi_\infty=0,
    \qquad
    F_{A_\infty}=-*du\,\Psi_\infty.
\end{equation}

\item The limiting monopole has a Dirac singularity of charge $K_a$ at
$x_a$. More precisely, as $r_a=d(\,\cdot\,,x_a)\downarrow0$,
\begin{equation}\label{eq:dirac-main}
    u=\frac{K_a}{r_a}+O(1),
    \qquad
    \abs{F_{A_\infty}}=\frac{K_a}{r_a^2}+O(1).
\end{equation}
Its scalar longitudinal curvature
$\ip{F_{A_\infty}}{\Psi_\infty}=-*du$ is uniquely determined by the
weighted $0$-cycle $D=\sum_aK_ax_a$. Once the parallel
$\U(1)$-reduction and its unit generator are fixed, the limiting
connection is determined, modulo $\U(1)$ gauge, up to addition of a
closed abelian $1$-form.
\end{enumerate}
\end{Mtheorem}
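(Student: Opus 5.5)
The plan is to start from the scalar identity for monopoles. For any monopole, away from zeros,
\[
\Delta\abs{\Phi}^2 = -2\abs{\nabla_A\Phi}^2 ,
\]
with $\Delta = d^*d \geqslant 0$. Combining this with Kato and Bogomolny gives the distributional identity
\[
\Delta\abs{\Phi} = -\frac{\abs{\nabla_A\Phi}^2-\abs{d\abs\Phi}^2}{\abs\Phi}\leqslant 0 .
\]
The cleaner exact statement I will use is the Green representation
\[
u_i(x)=m_i-\abs{\Phi_i}(x)=\int_X G(x,y)\,d\nu_i(y),
\]
where $\nu_i:=\Delta\abs{\Phi_i}$ is a nonnegative measure. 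It has total mass $4\pi k$, obtained via flux at infinity from $\abs{\Phi_i}\to m_i$ and the asymptotics of \cite{fadel2023asymptotics}. This holds because $u_i\geqslant0$ decays at infinity on an AC end, and $G\sim(4\pi r)^{-1}$ is the minimal Green function.

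The key comparison is $\nu_i$ versus $\mu_i=m_i^{-1}e_i\vol_g$. Since $\abs{\Phi_i}^{-1}\abs{\nabla\Phi_i}^2=m_i^{-1}e_i+u_i\,m_i^{-1}\abs{\Phi_i}^{-1}e_i$, on compacts $K\Subset M$ where $\abs{\Phi_i}\geqslant m_i/2$ the two densities differ by $O(m_i^{-1})$, modulo the longitudinal term $\abs{d\abs\Phi}^2/\abs\Phi$. That term is controlled by the $L^2$ bound on $\nabla\Phi$, so it is $O(m_i^{-1})$ in $L^1$ away from $\cS$.

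Step one is to establish the lower bound $\abs{\Phi_i}\geqslant m_i/2$ on compacts of $M$. I would argue inductively. Lemma~\ref{lem:zero-free} removes the zeros. Then the $\varepsilon$-regularity of \cite{fadeloliveira2026limitv5}*{Theorem~5.1} applied at scale $m_i^{-1}$, together with $\mu_i(B_r(x))<4\pi$ for $x\notin\cS$, shows that rescaled configurations near such $x$ have small energy and no zeros. Hence they are close to vacuum, with $\abs{\Phi_i}\geqslant m_i(1-\epsilon)$ on $m_i^{-1}$-balls. Next, the weak convergence $\mu_i\rightharpoonup 4\pi\sum K_a\delta_{x_a}$ and the Green representation give $u_i\to u$ weakly. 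The part of $\nu_i$ escaping to the end contributes $O(\sup_{\rho\geqslant R}G(x,\cdot))\to0$ locally, which is also how $k-\sum K_a$ is identified as escaping charge. Mass near $x_a$ gives $4\pi K_aG(\cdot,x_a)$. Mass in $K$ vanishes in the limit, but it must be shown not to produce local spikes of $u_i$; this follows once the exponential decay below is in hand, since the densities are then uniformly small in $C^0$.

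Step two, which I expect to be the main obstacle, is the exponential abelianization \eqref{eq:transverse-main}. For the transverse part $w_i:=\abs{(\nabla_{A_i}\Phi_i)^\perp}^2$, or equivalently $\abs{\nabla\Psi_i}^2$, the Weitzenböck formula together with the Bogomolny equation should yield a differential inequality of the form
\[
\Delta w_i + c\,\abs{\Phi_i}^2 w_i \leqslant C\,w_i
\]
on regions where $\abs{\Phi_i}\geqslant m_i/2$. This is the usual massive-mode mechanism, since $[\Phi,\cdot]$ acts on $\Psi^\perp$ with eigenvalues of size $\abs\Phi$. A comparison with $e^{-c m_i d(\cdot,\partial)}$ on a slightly larger compact, using a uniform $L^\infty$ bound from the $\varepsilon$-regularity, then gives $C e^{-cm_i}$ decay. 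Higher $C^j$ bounds follow by elliptic bootstrapping of the linearized equations in a gauge adapted to $\Psi_i$. The delicate point is to keep the constants uniform, because the curvature itself is $O(1)$ on $K$ and not small.

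Step three handles the remaining conclusions. With transverse parts exponentially small, the longitudinal equations reduce to $d\abs{\Phi_i}=*\langle F_{A_i},\Psi_i\rangle$ plus exponentially small errors, and $\Delta u_i$ is exponentially small. Combined with the weak limit from step one, elliptic estimates yield the $C^\infty_{\loc}$ convergence \eqref{eq:scalar-main}. For (iii), Uhlenbeck compactness with uniform curvature bounds on an exhaustion, after a diagonal subsequence, gives $A_i\to A_\infty$. Since $\nabla\Psi_i\to0$, $\Psi_i\to\Psi_\infty$ is parallel, and $\widetilde\Phi_i=-u_i\Psi_i\to-u\Psi_\infty$, while $F=*\nabla\widetilde\Phi$ passes to the limit, giving \eqref{eq:limit-main}. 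Part (iv) follows from the local expansion $G(\cdot,x_a)=(4\pi r_a)^{-1}+O(1)$ and from $\abs{F_{A_\infty}}=\abs{du}$. Finally, two reducible connections with the same $\Psi_\infty$ and the same curvature differ, in the $\U(1)$ reduction, by a closed $1$-form.
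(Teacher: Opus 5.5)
There is a genuine gap where you identify the weights of the limiting potential. Your Green representation is for $u_i$, so its Riesz measure is $\nu_i=\Delta u_i=-\Delta\abs{\Phi_i}$. It is this measure, not $\Delta\abs{\Phi_i}$, that is nonnegative, and off the zeros it has density $\abs{(\nabla_{A_i}\Phi_i)^\perp}^2/\abs{\Phi_i}$. However, Theorem~\ref{thm:concentration-clusters} only controls $\mu_i$. Your comparison of the two is pointwise and uses $\abs{\Phi_i}\geqslant m_i/2$, so it only shows that a weak limit $\nu$ of $\nu_i$ is supported on $\cS$; it says nothing about $\nu(\{x_a\})$. Near $x_a$ the two densities genuinely differ. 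The factor $m_i/\abs{\Phi_i}$ in front of the transverse term is unbounded near the zeros $p_{i,a,\beta}$. The longitudinal part $m_i^{-1}\abs{du_i}^2$ of $\mu_i$ carries a fixed fraction of each rescaled bubble energy $4\pi q_{a,\beta}$. Your $L^1$ bound for that term is also wrong. Since $\norm{\nabla_{A_i}\Phi_i}_{L^2(X)}^2=4\pi km_i$, the term is only $O(1)$ in general, and it is $o(1)$ on compacts of $M$ only because $\mu_i\to0$ there. Equality of total masses does not help either, because it is a single identity for $\ell$ unknown weights and it also involves whatever escapes through the end. So the claim that the mass near $x_a$ produces $4\pi K_aG(\,\cdot\,,x_a)$ is unsupported. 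With it go the formula for $u$ in (i) and the Dirac charges in (iv).

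The paper avoids this issue by using $v_i=m_i^{-1}(m_i^2-\abs{\Phi_i}^2)$, which is smooth and satisfies $\Delta v_i=2m_i^{-1}e_i$ exactly (Theorem~\ref{thm:Green-representation}). The weights are then read off from $\mu$ directly, and $u_i=v_i/(1+\sqrt{1-v_i/m_i})$ is recovered afterwards. Within your route the repair is short:
\begin{itemize}
\item $u_i-\tfrac12v_i=u_i^2/(2m_i)$, so $\nu_i-\mu_i=\Delta\bigl(u_i^2/(2m_i)\bigr)$ as distributions.
\item Your representation with $\nu_i(X)=4\pi k$, together with $G(x,y)\leqslant Cd(x,y)^{-1}$, gives a uniform $L^2_{\loc}$ bound on $u_i$ by Minkowski's inequality.
\item Hence $\int\varphi\,d(\nu_i-\mu_i)=(2m_i)^{-1}\int u_i^2\Delta\varphi\to0$, and $\nu=\mu$.
\end{itemize}
With this in place, your ordering is a genuine alternative. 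Proving the exponential transverse decay first makes $\Delta u_i$ exponentially small on compacts of $M$. That replaces the paper's local clearing of the full density $m_i^{-1}e_i$ (Proposition~\ref{prop:local-clearing}), which the paper needs only because $\Delta v_i$ contains the longitudinal term. For this to work, step two must keep a term of the form $C\abs{\nabla_{A_i}\Phi_i}\,w_i$ (the paper's $C_U(1+\abs{du_i})$), which you dropped. It is absorbed by the coercive term using $e_i^{1/2}=o(m_i^2)$ from the mass-scale $\varepsilon$-regularity. An $O(1)$ curvature bound on $K$ cannot be used here, since at that stage it is a consequence rather than an input.
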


\begin{remark}[Macroscopic and microscopic data]
\label{rmk:cluster-blindness}
At a concentration point $x_a$, a choice of complete moving-centre
extraction produces individual charges $q_{a,\beta}$ with
$K_a=\sum_\beta q_{a,\beta}$. For a fixed subsequential limit of the
mass-renormalized energy measures, the integer $K_a$ and the concentration weight $4\pi K_a$
are intrinsic, whereas the number of profiles, their individual charges,
and the limiting Euclidean monopoles may depend on a further subsequence
and on the choice of moving centres; see
\cite{fadeloliveira2026limitv5}*{Remark~8.3}. The Green potential and
the singular part of the residual field depend on the cluster only through
$K_a$. Thus the residual singular data record the weighted $0$-cycle
$D=\sum_aK_ax_a$, but not the number or charges of the individual
Euclidean profiles, their relative phases, or the hierarchy of scales at
which their centres separate.
\end{remark}

Taken together, Theorems~\ref{thm:concentration-clusters} and
\ref{thm:main} give a precise description of the degeneration at finite
points at two distinguished levels. The Euclidean clusters are the
microscopic nonabelian data, whereas the singular reducible monopole is the
macroscopic residual field.
These are the data at finite points expected in a compactification picture;
it does not by itself construct a topology on a compactified moduli space,
and it leaves open the finer description of clusters escaping through the
end. This interpretation is consistent with the picture from gauge theory
suggested by Donaldson and Segal \cite{donaldson2009gauge}.

There are related higher-dimensional developments. Fadel--Nagy--Oliveira
established the AC asymptotic theory for $\mathrm G_2$-monopoles
\cite{fadel2020asymptotic}*{Main Theorems~1 and~2}.
Parise--Pigati--Stern constructed current limits of codimension three for
Yang--Mills--Higgs fields
\cite{parise2025nonabelian}*{Theorem~1.2 and Corollary~1.8}, while Li
proved calibrated concentration and singular abelian limits for large mass
$\mathrm G_2$ and Calabi--Yau monopoles
\cite{li2025large}*{Theorems~1.12 and~4.3}. In those settings the weighted $0$-cycle is replaced by a calibrated cycle of codimension three.

The reverse construction on AC $3$-manifolds was carried out by Oliveira.
When $b_2(X)=0$, \cite{oliveira2016monopoles}*{Theorem~1} constructs large
mass monopoles by inserting separated charge-one BPS monopoles into a
reducible Dirac background. The parameters include the core positions,
relative phases, and a flat abelian class in $H^1(X;S^1)$; the last is the
ambiguity appearing in the limiting connection below. The examples in
\cite{fadel2019limit}*{Section~3} also exhibit Euclidean monopoles of
higher charge, as well as charge escaping through the AC end. Habibi Esfahani
proved a complementary gluing theorem for monopoles with prescribed Dirac
singularities on closed rational homology $3$-spheres
\cite{esfahani2022singular}*{Theorem~1}; see also
\cite{esfahani2022thesis}*{Chapter~1}.

\subsection{Organization}\label{subsec:organization}

Section~\ref{sec:splitting} recalls the Green representation, introduces
the splitting in the large Higgs field region, and proves local clearing of
the mass-renormalized energy. Section~\ref{sec:potential} identifies the limiting Green
potential and the longitudinal curvature. Section~\ref{sec:abelian}
proves exponential transverse decay and gauge compactness.
Section~\ref{sec:singularities} identifies the Dirac charges and the charge
escaping through the AC end. Section~\ref{sec:further-questions} records
several directions suggested by these two distinguished levels of the
large mass limit.

\subsection{Notation and conventions}\label{subsec:notation}

Our notation and normalization follow the principal bundle conventions of
\cite{fadel2023asymptotics}. The adjoint bundle is denoted by
$\mathfrak g_P$, and $\mathscr C(P)$ always denotes
\eqref{eq:finite-energy-configuration-space}. For
$A\in\mathscr A(P)$, the symbol $\nabla_A$ denotes the induced covariant
derivative, coupled with the Levi--Civita connection when acting on
$\mathfrak g_P$-valued tensors. We reserve $\abs{\,\cdot\,}$ for
pointwise norms and $\norm{\,\cdot\,}$ for norms on spaces of sections.

On the complement of a Higgs zero set, we write
\[
    \Psi:=\frac{\Phi}{\abs\Phi},
    \qquad
    \mathfrak g_P
    =\langle\Psi\rangle\oplus\langle\Psi\rangle^\perp,
\]
and denote the corresponding components by $\parallel$ and $\perp$.
For a $\mathfrak g_P$-valued tensor $T$ and a compact set $K$, we use
\[
    \norm{T}_{C^j(K)}
    :=
    \sum_{q=0}^j\sup_K\abs{\nabla_A^qT},
\]
where the covariant derivative is coupled with the Levi--Civita connection
on tensor indices. We write $B_r(x)$ for the geodesic ball of radius $r$
centred at $x$. Convergence of Radon measures is always weak-$*$ convergence against
functions in $C_c^0(X)$. The Riemannian volume measure is denoted by
$\vol_g$ and is omitted under integral signs unless useful. Energy quantities
divided by the monopole mass are called \emph{mass-renormalized}, whereas a
Higgs field divided by its mass is called \emph{mass-normalized}. We reserve
the phrase \emph{normalized Higgs field} for the unit-length field
$\Phi/\abs\Phi$. Thus $k$, $K_a$, and $q_{a,\beta}$ always denote the
charges themselves; no normalization by the mass is involved. We use the
geometer's convention $\Delta=d^*d$ on functions. Constants denoted by
$c,C>0$ may change from line to line and are independent of the mass and
the sequence index unless otherwise stated.

\section{Green representation and local clearing}\label{sec:splitting}

We now turn to the residual fields on $M=X\setminus\cS$. The first
ingredient is the exact Green representation of the scalar Higgs defect.

\subsection{The Green representation}\label{subsec:green}

By \cite{fadel2023asymptotics}*{Corollary~2.9}, the AC manifold $X$ is
nonparabolic and has a minimal positive Green function $G$. With the
positive Laplacian $\Delta=d^*d$, it satisfies
$\Delta_xG(x,y)=\delta_y$ and
\begin{equation}\label{eq:Green-estimates}
    G(x,y)=\frac1{4\pi d(x,y)}+O(1)\quad (x\to y),
    \qquad
    G(x,y)=O\bigl(d(x,y)^{-1}\bigr).
\end{equation}
The weighted AC Laplacian theory recalled in
\cite{fadel2023asymptotics}*{Theorem~2.12(iv)} gives, for $x$ in a fixed compact
set and $\rho(y)\to\infty$,
\begin{equation}\label{eq:Green-infinity}
    G(x,y)=\frac{1}{\Vol(\Sigma)}\rho(y)^{-1}
    +O(\rho(y)^{-1-\eta})
\end{equation}
for some $\eta>0$, with the corresponding differentiated estimates. The
minimal Green function is symmetric, so the same expansion applies when
the first variable tends to infinity and the second remains in a fixed
compact set.

\begin{theorem}[Green representation
\cite{fadel2023asymptotics}*{Theorem~3.11}]
\label{thm:Green-representation}
For every monopole $(A,\Phi)\in\mathscr{C}(P)$ of mass $m$,
\begin{equation}\label{eq:Green-representation}
    m^2-\abs{\Phi}^2
    =2\int_XG(\,\cdot\,,y)\abs{\nabla_A\Phi}^2(y).
\end{equation}
\end{theorem}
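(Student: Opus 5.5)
The proof rests on the Weitzenböck identity for $\abs\Phi^2$ combined with the minimality of $G$. First I would show that $\Phi$ is $\nabla_A$-harmonic. By the Bogomolny equation \eqref{eq:bogomolny} and the Bianchi identity,
\[
    \nabla_A^*\nabla_A\Phi
    =
    \pm *d_A*d_A\Phi
    =
    \pm *d_AF_A
    =0 .
\]
Since $\Delta=d^*d$ is the positive Laplacian, the pointwise identity $\tfrac12\Delta\abs\Phi^2=\ip{\nabla_A^*\nabla_A\Phi}{\Phi}-\abs{\nabla_A\Phi}^2$ then gives
\[
    \Delta\bigl(m^2-\abs\Phi^2\bigr)=2\abs{\nabla_A\Phi}^2 .
\]
Thus $w:=m^2-\abs\Phi^2$ is smooth, nonnegative (because $\abs\Phi\leqslant m$), superharmonic in the geometer's sign convention, and satisfies $w\to0$ at infinity (because $\abs\Phi\to m$). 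These are exactly the facts recalled from \cite{fadel2023asymptotics}*{Theorem~1.1}.

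To avoid needing a priori decay rates for $\abs{\nabla_A\Phi}^2$, I would not show directly that the Green potential is finite. Instead I would argue by exhaustion. Take an increasing family of smooth relatively compact domains $\Omega_R$ exhausting $X$; for large $R$ one can take $\Omega_R=\{\rho<R\}$ with boundary $\Sigma_R$. Let $G_R$ be the Dirichlet Green function of $\Omega_R$. By construction of the minimal positive Green function, $G_R\uparrow G$ monotonically as $R\to\infty$. Green's formula on $\Omega_R$ gives, for $x\in\Omega_R$,
\[
    w(x)=2\int_{\Omega_R}G_R(x,y)\abs{\nabla_A\Phi}^2(y)\,dy
    +\int_{\Sigma_R}w\,(-\partial_{\nu_y}G_R(x,y))\,d\sigma(y).
\]
The boundary kernel $-\partial_\nu G_R(x,\cdot)$ is nonnegative and is the harmonic measure of $\Omega_R$ seen from $x$, so it has total mass $1$. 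Hence the boundary term lies in $[0,\sup_{\Sigma_R}w]$, and this bound tends to $0$ as $R\to\infty$.

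Letting $R\to\infty$ and applying the monotone convergence theorem to the nonnegative integrands $G_R\abs{\nabla_A\Phi}^2\mathbf 1_{\Omega_R}\uparrow G\abs{\nabla_A\Phi}^2$ yields \eqref{eq:Green-representation} pointwise. As a byproduct, the right-hand side is finite everywhere.

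The main obstacle is justifying the exhaustion step cleanly. Two facts are needed: that $G_R$ increases to the \emph{minimal} Green function $G$ (this is standard for nonparabolic manifolds, and \cite{fadel2023asymptotics}*{Corollary~2.9} is the input), and that the harmonic measure bound on the boundary term holds. If the monotone-exhaustion description of $G$ were unavailable, I would fall back on a second route. Set $v:=2\int G\abs{\nabla_A\Phi}^2$, using the known decay $\abs{\nabla_A\Phi}=O(\rho^{-2})$ from \cite{fadel2023asymptotics} together with \eqref{eq:Green-estimates} to see that $v$ is finite and $v\to0$ at infinity. Then $w-v$ is harmonic and tends to $0$ at infinity, so it vanishes by the maximum principle.
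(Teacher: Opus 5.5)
The paper gives no proof of this statement. It imports it from Theorem~3.11 of \cite{fadel2023asymptotics}, alongside the facts $\abs\Phi\leqslant m$ and $\abs\Phi\to m$ recalled in the introduction. Your proposal is therefore a self-contained derivation rather than a reconstruction, and your main exhaustion route is correct.

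The signs check out. In dimension three $*\nabla_A\Phi=F_A$, so $\nabla_A^*\nabla_A\Phi=-*d_AF_A=0$. Green's formula on $\Omega_R=\{\rho<R\}$ with $\Delta=d^*d$ gives exactly your interior term plus the Poisson integral against $-\partial_\nu G_R(x,\cdot)\geqslant0$, which has total mass one. Finally, $G_R\uparrow G$ is precisely the Li--Tam characterization of the minimal positive Green function, with monotonicity in $R$ coming from the maximum principle on $\Omega_R$. So monotone convergence closes the argument.

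Compared with citing the result, or with a potential-plus-maximum-principle argument like your fallback, the exhaustion route buys several things:
\begin{enumerate}
\item It needs no decay rate for $\abs{\nabla_A\Phi}$ and no a priori finiteness of the Green potential; finiteness comes out as a byproduct.
\item It uses only $\nabla_A^*\nabla_A\Phi=0$, so it holds for every finite energy Yang--Mills--Higgs critical point, not just monopoles.
\item It does not even need $w\geqslant0$, since the boundary term lies between $\inf_{\Sigma_R}w$ and $\sup_{\Sigma_R}w$.
\end{enumerate}

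Its one genuinely analytic input is the uniform convergence $\abs\Phi\to m$ at infinity. That is legitimately available here as recalled background. If you want the argument independent of the source, check that this convergence does not itself rest on the representation. It follows, for instance, from $m-\abs\Phi\in L^6$, Kato's inequality, and $\abs{\nabla_A\Phi}\to0$ at infinity, the last by $\varepsilon$-regularity at unit scale.

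Your fallback route is also fine, and it does not need the rate $O(\rho^{-2})$. Integrability of $\abs{\nabla_A\Phi}^2$, together with $\abs{\nabla_A\Phi}\to0$ at infinity and the global bound $G\leqslant C\,d^{-1}$, already shows that the Green potential is finite and tends to zero at infinity.
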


\subsection{Splitting in the large Higgs field region}

Fix $K\Subset M$. Lemma~\ref{lem:zero-free} shows that the following
quantities are defined on $K$ for all sufficiently large $i$:
\[
    \Psi_i:=\frac{\Phi_i}{\abs{\Phi_i}},
    \qquad u_i:=m_i-\abs{\Phi_i},
    \qquad \widetilde\Phi_i:=-u_i\Psi_i.
\]
The unit Higgs field induces the orthogonal decomposition
\[
    \mathfrak g_P
    =
    \langle\Psi_i\rangle\oplus\langle\Psi_i\rangle^\perp.
\]
Accordingly, every $\mathfrak g_P$-valued form $\xi$ decomposes as
\[
    \xi=\xi^\parallel+\xi^\perp,
    \qquad
    \xi^\parallel=\ip{\xi}{\Psi_i}\Psi_i.
\]
Differentiating $\Phi_i=(m_i-u_i)\Psi_i$ and using the monopole equation
gives
\begin{equation}\label{eq:splitting-identities}
\begin{split}
    \nabla_{A_i}\Phi_i
        &=-du_i\otimes\Psi_i+(m_i-u_i)\nabla_{A_i}\Psi_i,\\
    (\nabla_{A_i}\Phi_i)^\perp
        &=\abs{\Phi_i}\nabla_{A_i}\Psi_i,\\
    \widehat\omega_i:=\ip{F_{A_i}}{\Psi_i}
        &=*d\abs{\Phi_i}=-*du_i,\\
    *F_{A_i}-\nabla_{A_i}\widetilde\Phi_i
        &=m_i\nabla_{A_i}\Psi_i.
\end{split}
\end{equation}
Since $\nabla_{A_i}^*\nabla_{A_i}\Phi_i=0$, the scalar Kato identity is
\begin{equation}\label{eq:scalar-u}
    \Delta u_i
    =
    \frac{\abs{(\nabla_{A_i}\Phi_i)^\perp}^2}{\abs{\Phi_i}}.
\end{equation}

We first show that the mass-renormalized energy density vanishes on compact subsets of $M$.
The proof uses mass-scale estimates from the corrected and expanded
arXiv version \cite{fadeloliveira2026limitv5} only to enter a regime
in which the favourable Higgs term in the Bochner formula is coercive;
the final mean value argument takes place on a fixed geometric scale.

\begin{proposition}[Local clearing of the mass-renormalized energy]\label{prop:local-clearing}
For every compact $K\Subset M$,
\begin{equation}\label{eq:local-clearing}
    \norm{m_i^{-1}e_i}_{C^0(K)}\longrightarrow0.
\end{equation}
\end{proposition}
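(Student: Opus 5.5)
The approach is to upgrade the weak convergence $\mu_i\rightharpoonup\mu$, which by \eqref{eq:measure-intro} gives $\mu_i(B_{2r}(x))\to0$ for every $x\in K$ with $2r<\dist(K,\cS)$, to uniform pointwise control through a mean value inequality for $e_i$. The difficulty is that the standard Bochner inequality for monopoles,
\[
    \Delta e_i\leqslant C\bigl(1+\abs{F_{A_i}}\bigr)e_i,
\]
has a superlinear coefficient. It therefore yields $\sup e_i\lesssim r^{-3}\int_{B_{r}} e_i$ only below an energy threshold $\int_{B_r}\abs{F}^2\ll r^{-1}$. Since $\int e_i\sim m_i$, this applies only at scales $r\sim m_i^{-1}$, giving $e_i\lesssim m_i^{2}\cdot$(small), which is too weak. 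So the plan is to use the Higgs term.

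\textbf{Step 1: a coercive Bochner inequality.} For monopoles the Weitzenböck formula for $\nabla_A\Phi$ contains the term $-\abs{[\Phi,\nabla_A\Phi]}^2$. Splitting as in \eqref{eq:splitting-identities}, this gives, schematically,
\[
    \Delta e_i+2\abs{\Phi_i}^2\abs{(\nabla_{A_i}\Phi_i)^\perp}^2
    \leqslant C e_i+C\abs{F_{A_i}}\,\abs{(\nabla_{A_i}\Phi_i)^\perp}^2 .
\]
The parallel part of the cubic term $\langle[F,\nabla\Phi],\nabla\Phi\rangle$ vanishes, because $[\Psi,\Psi]=0$, so the cubic term is at most $\abs{F}\abs{(\nabla\Phi)^\perp}^2$. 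Hence wherever $\abs{\Phi_i}^2\geqslant C\abs{F_{A_i}}$, the bad term is absorbed and we obtain the linear inequality $\Delta e_i\leqslant Ce_i$ with $C$ depending only on the geometry.

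\textbf{Step 2: enter that regime on a neighbourhood of $K$.} We need $\abs{\Phi_i}\geqslant m_i/2$ and $e_i\leqslant o(m_i^2)$ on a fixed neighbourhood $K'$ of $K$. For the first, take the Green representation \eqref{eq:Green-representation} divided by $m_i$:
\[
    m_i-\abs{\Phi_i}\leqslant \frac{m_i^2-\abs{\Phi_i}^2}{m_i}=2\int G(\cdot,y)\,d\mu_i(y).
\]
Split the integral into the part near $\cS$, where $G$ is bounded since $x\in K'$ stays away from $\cS$; the part near $x$; and the far part. The near-$x$ part needs $\mu_i(B_\delta(x))$ small together with a Morrey-type bound, and this is where the mass-scale estimates of \cite{fadeloliveira2026limitv5} enter. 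Concretely, the $\varepsilon$-regularity at scale $m_i^{-1}$ shows that, away from the zeros, $m_i^{-2}e_i$ is small: the rescaled limits at points $p_i\to x\in K'$ are Euclidean monopoles with no zeros, since zeros are excluded by Lemma~\ref{lem:zero-free}, and with zero energy density by the cluster exhaustion. This gives $u_i=O(1)$ and $\sup_{K'}e_i=o(m_i^2)$. Consequently $\abs{F_{A_i}}=o(m_i)\leqslant \abs{\Phi_i}^2/C$, and Step 1 applies.

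\textbf{Step 3: mean value on a fixed scale.} With $\Delta e_i\leqslant Ce_i$ on $K'$, the Moser/mean value inequality at a fixed radius $r$ gives
\[
    \sup_{B_{r/2}(x)}m_i^{-1}e_i\leqslant Cr^{-3}\mu_i(B_r(x))\longrightarrow0,
\]
uniformly for $x\in K$ by compactness and \eqref{eq:measure-intro}. This proves \eqref{eq:local-clearing}.

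The main obstacle is Step 2: obtaining the a priori sublinear bound $e_i=o(m_i^2)$ and $\abs{\Phi_i}\gtrsim m_i$ uniformly on $K'$, since this requires the blow-up/contradiction argument at the mass scale. I would try it first by contradiction: take points $p_i\in K'$ with $m_i^{-2}e_i(p_i)\geqslant\varepsilon$, rescale, and obtain a nontrivial Euclidean monopole bubble at a point of $K'\subset M$. That bubble would carry energy at least $4\pi$ into $\mu$, contradicting the fact that $\supp\mu=\cS$.
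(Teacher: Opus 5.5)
Your three steps follow the paper's proof. You use mass-scale $\varepsilon$-regularity from \cite{fadeloliveira2026limitv5} to enter the regime where the Higgs term in the Bochner inequality is coercive. You absorb the cubic term, which, as you note, always carries two transverse factors, to get $\Delta e_i\leqslant Ce_i$ on a fixed neighbourhood of $K$. You finish with a mean value inequality at a fixed scale. The only variation is that you derive the lower Higgs bound from the Green representation, whereas the paper quotes \cite{fadeloliveira2026limitv5}*{Corollary~6.1}. Your route works, but only with the bounds stated below.

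Step~2 as written has a scaling error, and the claims you make there do not follow. Rescaling by the mass multiplies energy densities by $m_i^{-4}$, not $m_i^{-2}$. So $\varepsilon$-regularity at radius $R/m_i$ gives $e_i\leqslant C\delta_im_i^4$ with $\delta_i:=\mu_i(V)\to0$, that is, $e_i=o(m_i^4)$. The same slip appears in your introductory heuristic: at $r=m_i^{-1}$ one has $r^{-3}\int_{B_r}e_i=m_i^4\mu_i(B_r)$.

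The bound $e_i=o(m_i^2)$ cannot be obtained at the mass scale, and your contradiction argument for it fails. If $m_i^{-2}e_i(p_i)\geqslant\varepsilon$ but $m_i^{-4}e_i(p_i)\to0$, the rescaled limit is trivial and there is no bubble.

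Similarly, $u_i=O(1)$ is not available at this stage. Split the Green integral using $m_i^{-1}e_i\leqslant C\delta_im_i^3$ on $B_{1/m_i}(x)$, and $G\leqslant Cd^{-1}$ with $\mu_i(B_\delta(x))\leqslant\delta_i$ outside it. This yields only $u_i\leqslant v_i\leqslant C+C\delta_im_i=o(m_i)$. The bound $u_i=O(1)$ is Proposition~\ref{prop:potential-convergence}, whose proof uses the very clearing statement you are proving.

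The repair is that these weaker bounds are exactly what Step~1 needs. You have $\abs{\Phi_i}\geqslant m_i/2$, and
\[
\abs{F_{A_i}}=e_i^{1/2}\leqslant C\delta_i^{1/2}m_i^2=o(m_i^2)\leqslant\abs{\Phi_i}^2/C
\]
for large $i$. This is precisely the paper's absorption of $Ce_i^{1/2}\abs{(\nabla_{A_i}\Phi_i)^\perp}^2$ into $c\abs{\Phi_i}^2\abs{(\nabla_{A_i}\Phi_i)^\perp}^2$.

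Note also that the mass-scale blow-up limit is trivial simply because its energy on any fixed ball is bounded by $\mu_i(V)\to0$. Neither the absence of zeros nor cluster exhaustion is needed. With the exponents corrected, your argument closes and coincides with the paper's.
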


\begin{proof}
Choose nested relatively compact open sets
\[
    K\Subset U_0\Subset U_1\Subset V\Subset M.
\]
Since $\overline V\Subset M$ and the limiting measure in
\eqref{eq:measure-intro} is supported on $\cS$, one has
$\mu(\overline V)=0$. The Portmanteau theorem therefore gives
\begin{equation}\label{eq:mu-clears-V}
    0\leq\delta_i:=\mu_i(V)
    \leq\mu_i(\overline V)\longrightarrow0.
\end{equation}

Set $\Lambda:=4\pi k$. Since the $(A_i,\Phi_i)$ are monopoles of fixed
charge $k$,
\[
    m_i^{-1}\norm{\nabla_{A_i}\Phi_i}_{L^2(X)}^2
    =m_i^{-1}\mathscr E_X(A_i,\Phi_i)
    =4\pi k=\Lambda.
\]
Let $R:=R_\Lambda$ and $\varepsilon_\Lambda$ be the constants in
\cite{fadeloliveira2026limitv5}*{Corollary~6.1}, and let $m_0$ and
$C_R$ denote the corresponding constants from the mass-uniform
$\varepsilon$-regularity estimate
\cite{fadeloliveira2026limitv5}*{Theorem~5.1}. Since
$\delta_i\to0$ and $m_i\to\infty$, after increasing $i$ we may assume
\[
    \delta_i<\varepsilon_\Lambda,
    \qquad
    m_i>m_0,
    \qquad
    Rm_i^{-1}\leq\rho_0.
\]
For all sufficiently large $i$, every ball $B_{R/m_i}(x)$ with
$x\in U_1$ is contained in $V$ and satisfies
\[
    m_i^{-1}\mathscr E_{B_{R/m_i}(x)}(A_i,\Phi_i)
    =\mu_i\bigl(B_{R/m_i}(x)\bigr)
    \leq\delta_i<\varepsilon_\Lambda.
\]
Applying
\cite{fadeloliveira2026limitv5}*{Corollary~6.1} and the
$\varepsilon$-regularity estimate
\cite{fadeloliveira2026limitv5}*{Theorem~5.1} at radius $R/m_i$ yields,
uniformly for $x\in U_1$,
\begin{equation}\label{eq:mass-scale-entry}
    e_i(x)\leq \widetilde C_R\delta_i m_i^4,
    \qquad
    \abs{\Phi_i}(x)>\frac{m_i}{4},
    \qquad
    \widetilde C_R:=C_RR^{-3}.
\end{equation}

We next improve the first estimate from the mass scale to a fixed scale.
The inequalities \cite{fadel2023asymptotics}*{Lemma~4.8, (4.14)--(4.15)},
together with $F_{A_i}=*\nabla_{A_i}\Phi_i$, imply on $U_1$
\begin{equation}\label{eq:clearing-bochner}
\begin{split}
    \frac12\Delta e_i
    &+\abs{\nabla_{A_i}^2\Phi_i}^2
     +c\abs{\Phi_i}^2
       \abs{(\nabla_{A_i}\Phi_i)^\perp}^2\\
    &\leq C_{U_1}e_i
     +C e_i^{1/2}
       \abs{(\nabla_{A_i}\Phi_i)^\perp}^2.
\end{split}
\end{equation}
Indeed, all the terms which are not bounded by $C_{U_1}e_i$ contain at
least two transverse factors, and the remaining factor is bounded by
$Ce_i^{1/2}$. By \eqref{eq:mass-scale-entry},
$e_i^{1/2}\leq \widetilde C_R^{1/2}\delta_i^{1/2}m_i^2$, whereas
$\abs{\Phi_i}^2\geq m_i^2/16$. Hence, the final term in
\eqref{eq:clearing-bochner} is absorbed by the coercive Higgs term for all
large $i$. We obtain
\begin{equation}\label{eq:fixed-scale-subsolution}
    \Delta e_i\leq C_{U_1}e_i
    \qquad\text{on }U_1,
\end{equation}
with a constant independent of $i$.

Choose $r>0$ so that $B_{2r}(x)\subset U_1$ for every $x\in U_0$. The
local mean value inequality applied to
\eqref{eq:fixed-scale-subsolution} gives
\[
    \sup_{B_r(x)}e_i
    \leq C_r\int_{B_{2r}(x)}e_i.
\]
A finite covering of $K$ therefore yields
\[
    \sup_Km_i^{-1}e_i
    \leq C_{K,U_1}m_i^{-1}\int_{U_1}e_i
    =C_{K,U_1}\mu_i(U_1)
    \longrightarrow0
\]
by \eqref{eq:mu-clears-V}. This is \eqref{eq:local-clearing}.
\end{proof}

\section{\texorpdfstring{The Green potential of the concentration $0$-cycle}{The Green potential of the concentration 0-cycle}}\label{sec:potential}

Define
\begin{equation}\label{eq:def-v-i}
    v_i:=m_i^{-1}\bigl(m_i^2-\abs{\Phi_i}^2\bigr)
    =u_i\left(1+\frac{\abs{\Phi_i}}{m_i}\right).
\end{equation}
By Theorem~\ref{thm:Green-representation},
\begin{equation}\label{eq:v-potential}
    v_i(x)=2\int_XG(x,y)\,d\mu_i(y),
    \qquad
    \Delta v_i=2m_i^{-1}e_i.
\end{equation}

\begin{proposition}[Convergence of the Green potentials]
\label{prop:potential-convergence}
For every $0<\alpha<1$, on $M=X\setminus\cS$,
\begin{equation}\label{eq:v-limit}
    v_i\longrightarrow v:=8\pi\sum_{a=1}^{\ell}K_aG(\,\cdot\,,x_a)
    \qquad\text{in }C^{1,\alpha}_{\loc}(M).
\end{equation}
Consequently,
\begin{equation}\label{eq:u-limit}
    u_i\longrightarrow u:=\frac12v
    =4\pi\sum_{a=1}^{\ell}K_aG(\,\cdot\,,x_a)
    \qquad\text{in }C^{1,\alpha}_{\loc}(M),
\end{equation}
and, for every $K\Subset M$,
\begin{equation}\label{eq:large-Higgs}
    \inf_K\frac{\abs{\Phi_i}}{m_i}\longrightarrow1.
\end{equation}
\end{proposition}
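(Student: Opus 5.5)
Fix $K\Subset M$ and choose a relatively compact open $V$ with $K\Subset V\Subset M$. The starting point is the exact representation \eqref{eq:v-potential}, $v_i=2\int_XG(\cdot,y)\,d\mu_i(y)$, together with the measure convergence \eqref{eq:measure-intro}. I will split the integral into a near part over $V$, a far part over a large region $\{\rho\le R\}\setminus V$ away from $K$, and an end part over $\{\rho>R\}$.

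\emph{Near part.} By Proposition~\ref{prop:local-clearing}, $m_i^{-1}e_i\to0$ uniformly on compact subsets of $M$, so on $V$ the Poisson equation $\Delta v_i=2m_i^{-1}e_i$ has right-hand side tending to $0$ in $C^0$. Once $v_i$ is bounded in $C^0(V)$, interior $W^{2,p}$ estimates for all $p<\infty$ and Morrey's embedding give bounds in $C^{1,\alpha}$ on compact subsets of $V$. This reduces the problem to proving pointwise (locally uniform) convergence $v_i\to v$ on $M$, together with a uniform bound. Any $C^{1,\alpha}$-subsequential limit is then identified and the whole sequence converges.

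\emph{Pointwise convergence.} Fix a cutoff $\chi\in C_c^\infty(V)$ with $\chi\equiv1$ near $K$. For $x\in K$, write
\[
v_i(x)=2\int\chi G(x,\cdot)\,d\mu_i+2\int(1-\chi)G(x,\cdot)\,d\mu_i .
\]
For the first term, $\mu_i(V)\to0$ and the density $m_i^{-1}e_i\to0$ uniformly on $\operatorname{spt}\chi$. Since $G(x,\cdot)$ is integrable near $x$, uniformly for $x\in K$, this term tends to $0$. In the second term, $(1-\chi)G(x,\cdot)$ is continuous and bounded, uniformly and equicontinuously in $x\in K$, but it is not compactly supported. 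Cut again with $\eta_R$, where $\eta_R\equiv1$ on $\{\rho\le R\}$ and $\eta_R$ is supported in $\{\rho\le2R\}$. On the compactly supported piece, weak-$*$ convergence gives
\[
2\int(1-\chi)\eta_RG(x,\cdot)\,d\mu_i\to 8\pi\sum_aK_aG(x,x_a)
\]
for $R$ large, uniformly in $x\in K$ by equicontinuity. For the tail piece, \eqref{eq:Green-infinity} gives $G(x,y)\le C_K\rho(y)^{-1}\le C_KR^{-1}$, so the tail is at most $C_KR^{-1}\mu_i(X)=4\pi kC_KR^{-1}$. Letting $i\to\infty$ and then $R\to\infty$ gives $v_i\to v$ uniformly on $K$. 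The same estimates give the uniform bound needed above.

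\emph{The consequences.} Uniform convergence shows $v_i$ is bounded on $K$, so $m_i^2-\abs{\Phi_i}^2=m_iv_i=O(m_i)$ and hence $\abs{\Phi_i}/m_i\to1$ uniformly on $K$. This is \eqref{eq:large-Higgs}. Then $u_i=v_i/(1+\abs{\Phi_i}/m_i)$. To get $C^{1,\alpha}$ convergence of $u_i$, I also need the Hölder-derivative control of $\abs{\Phi_i}/m_i$. This follows because $\abs{\Phi_i}/m_i=\sqrt{1-v_i/m_i}$ and $v_i/m_i\to0$ in $C^{1,\alpha}$. Hence $u_i\to v/2$ in $C^{1,\alpha}_{\loc}(M)$.

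\emph{Main obstacle.} The delicate step is controlling the mass escaping through the end. The measures converge only weak-$*$ against $C^0_c$, and the total mass $4\pi k$ may exceed $4\pi\sum K_a$. The uniform decay of $G(x,\cdot)$ at infinity from \eqref{eq:Green-infinity}, combined with the fixed total mass, is what makes the escaping part contribute nothing in the limit. A secondary point is near-diagonal uniformity, which relies on uniform clearing rather than measure convergence alone.
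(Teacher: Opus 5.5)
Your proposal is correct and follows essentially the paper's argument: the Green representation is split into a near-diagonal piece controlled by local clearing, a compact piece handled by weak-$*$ convergence, and an end piece killed by \eqref{eq:Green-infinity} together with $\mu_i(X)=4\pi k$; this is followed by interior $W^{2,p}$ estimates and the explicit inversion $u_i=v_i/(1+\sqrt{1-v_i/m_i})$. The differences are minor. You use smooth cutoffs rather than source balls with $\mu(\partial B_a)=0$, and you get derivative convergence purely from elliptic estimates plus compactness, instead of first proving $C^1$ convergence from the differentiated kernel bounds; the paper's own H\"older step in fact only uses the $C^0$ (hence $L^p$) convergence.
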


\begin{proof}
Fix $K\Subset M$ and choose $K\Subset U\Subset M$. Let $B_a$ be mutually
disjoint geodesic balls about $x_a$, with closures disjoint from
$\overline U$, and choose a compact set $L\Subset X$ containing
$\overline U\cup\bigcup_a\overline B_a$. We decompose
\eqref{eq:v-potential} into the integrals over $\bigcup_aB_a$, over
$L\setminus\bigcup_aB_a$, and over $X\setminus L$.

On $K\times\bigcup_a\overline B_a$, the functions
$\nabla_x^qG(x,y)$ are smooth for every $q$. Choosing the radii so that
$\mu(\partial B_a)=0$, weak convergence of the measures gives
\begin{equation}\label{eq:source-ball-limit}
    2\int_{\bigcup_aB_a}G(x,y)\,d\mu_i(y)
    \longrightarrow
    8\pi\sum_aK_aG(x,x_a)
\end{equation}
uniformly with every $x$-derivative on $K$. Indeed, the continuous map
\[
    K\ni x\longmapsto\nabla_x^qG(x,\cdot)
    \in C^0\left(\bigcup_a\overline B_a\right)
\]
has compact image.

The limiting measure vanishes on the compact set
$L\setminus\bigcup_aB_a$. Away from a small neighbourhood of the
diagonal in $U\times U$, the kernel and its first $x$-derivative are
bounded and continuous, so weak convergence makes the corresponding
integrals tend uniformly to zero. Near the diagonal, Proposition~\ref{prop:local-clearing} gives
\[
    \norm{m_i^{-1}e_i}_{C^0(U)}\longrightarrow0.
\]
The local Green estimates
\[
    G(x,y)\leq C d(x,y)^{-1},
    \qquad
    \abs{\nabla_xG(x,y)}\leq C d(x,y)^{-2}
\]
therefore imply, uniformly for $x\in K$ and sufficiently small fixed
$\delta>0$,
\begin{align*}
    \int_{U\cap B_\delta(x)}
    G(x,y)m_i^{-1}e_i(y)\,dy
      &\leq C\delta^2\norm{m_i^{-1}e_i}_{C^0(U)},\\
    \int_{U\cap B_\delta(x)}
    \abs{\nabla_xG(x,y)}m_i^{-1}e_i(y)\,dy
      &\leq C\delta\norm{m_i^{-1}e_i}_{C^0(U)}.
\end{align*}
Thus the compact contribution away from the source balls converges to zero
in $C^1(K)$.

Finally, the total measures satisfy $\mu_i(X)=4\pi k$. By the
differentiated asymptotic estimate in \eqref{eq:Green-infinity},
\[
\begin{split}
    &\sup_{x\in K}
    \int_{X\setminus L}
    \left(G(x,y)+\abs{\nabla_xG(x,y)}\right)\,d\mu_i(y)\\
    &\qquad\leq
    4\pi k
    \sup_{\substack{x\in K\\y\in X\setminus L}}
    \left(G(x,y)+\abs{\nabla_xG(x,y)}\right)
    \longrightarrow0
\end{split}
\]
as $L$ exhausts $X$. Together with \eqref{eq:source-ball-limit}, this
proves $v_i\to v$ in $C^1(K)$.

To obtain the stated H\"older convergence, let
$K\Subset U_0\Subset U\Subset M$. Applying the preceding $C^1$
argument with $\overline U$ in place of $K$ gives, after enlarging the
outer relatively compact neighbourhood used there,
$v_i\to v$ in $L^p(U)$ for every finite $p$. Since $v$ is harmonic on
$U$ and
$\Delta v_i=2m_i^{-1}e_i$, the interior $W^{2,p}$ estimate gives
\[
    \norm{v_i-v}_{W^{2,p}(U_0)}
    \leq C_{p,U_0,U}
       \left(
\norm{v_i-v}_{L^p(U)}
             +\norm{m_i^{-1}e_i}_{L^p(U)}\right)
    \longrightarrow0
\]
for every finite $p$. Taking $p>3/(1-\alpha)$ gives
\eqref{eq:v-limit} by Sobolev embedding.

Because $0\leq\abs{\Phi_i}\leq m_i$, one has $0\leq u_i\leq m_i$, and
\eqref{eq:def-v-i} selects the branch
\begin{equation}\label{eq:u-from-v}
    u_i
    =m_i\left(1-\sqrt{1-\frac{v_i}{m_i}}\right)
    =\frac{v_i}{1+\sqrt{1-v_i/m_i}}.
\end{equation}
The local boundedness and $C^{1,\alpha}$ convergence of $v_i$, together
with $m_i\to\infty$, now give $u_i\to v/2$ in $C^{1,\alpha}(K)$. This
proves \eqref{eq:u-limit}; since
$\abs{\Phi_i}/m_i=1-u_i/m_i$, it also proves
\eqref{eq:large-Higgs}.
\end{proof}

\begin{corollary}[Longitudinal convergence]\label{cor:longitudinal}
For every $K\Subset M$ and $0<\alpha<1$,
\begin{equation}\label{eq:longitudinal-convergence}
    \widehat\omega_i=-*du_i\longrightarrow-*du
    \qquad\text{in }C^{0,\alpha}(K).
\end{equation}
In particular, the longitudinal curvature is determined by the concentration $0$-cycle and requires no harmonic correction.
\end{corollary}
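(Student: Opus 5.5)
The corollary follows directly from Proposition~\ref{prop:potential-convergence} together with the algebraic identity in \eqref{eq:splitting-identities}.

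First, on $K\Subset M$ and for all large $i$, Lemma~\ref{lem:zero-free} shows that $\Phi_i\neq0$, so $\Psi_i$ and $u_i$ are defined. The third line of \eqref{eq:splitting-identities} then gives
\[
    \widehat\omega_i=\ip{F_{A_i}}{\Psi_i}=*d\abs{\Phi_i}=-*du_i .
\]
This is immediate from the Bogomolny equation \eqref{eq:bogomolny}: pairing $F_{A_i}=*\nabla_{A_i}\Phi_i$ with $\Psi_i$ gives $*\ip{\nabla_{A_i}\Phi_i}{\Psi_i}=*d\abs{\Phi_i}$, because $\ip{\nabla_{A_i}\Psi_i}{\Psi_i}=0$ when $\abs{\Psi_i}=1$.

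Next, \eqref{eq:u-limit} gives $u_i\to u$ in $C^{1,\alpha}(K)$. Hence $du_i\to du$ in $C^{0,\alpha}(K)$, and since the Hodge star is a fixed smooth bundle isometry, $*du_i\to *du$ in $C^{0,\alpha}(K)$. This proves \eqref{eq:longitudinal-convergence}.

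For the final clause, note that the limit $-*du$ is computed from $u=4\pi\sum_aK_aG(\cdot,x_a)$. This function depends only on the weighted $0$-cycle $D=\sum_aK_ax_a$ and the minimal Green function. Proposition~\ref{prop:potential-convergence} identified it exactly, with no additive harmonic term. The reason is that the Green representation of Theorem~\ref{thm:Green-representation} is exact, and the contribution from near the AC end was shown there to vanish uniformly on compacts.

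There is no real obstacle here. The only point to check is that the $C^{1,\alpha}$ convergence of $u_i$ already obtained is enough, which it is. Upgrading to $C^\infty_{\loc}$ convergence requires the smoothness of the higher derivatives and is deferred to the later sections.
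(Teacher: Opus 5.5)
Your proposal is correct and follows the paper's own argument. You combine the third identity in \eqref{eq:splitting-identities} with the $C^{1,\alpha}_{\loc}$ convergence \eqref{eq:u-limit}, and you attribute the absence of a harmonic correction to the exact Green representation, which fixes $u$ itself rather than only $du$.
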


\begin{proof}
This is the third identity in \eqref{eq:splitting-identities} combined
with \eqref{eq:u-limit}. The absence of a harmonic correction is a direct
consequence of the exact Green representation: the scalar $u$, and hence
$-*du$, is fixed by the limiting measure rather than only by its exterior
derivative.
\end{proof}

\section{Transverse decay and gauge compactness}\label{sec:abelian}

We next prove that the components orthogonal to the Higgs field vanish on
compact subsets of $M$. This is the three-dimensional form of
abelianization.

\begin{lemma}[Derivative bounds at the mass scale]\label{lem:polynomial-bounds}
For every $K\Subset M$ and every integer $q\geq0$, there is
$C_{K,q}<\infty$ such that, for all sufficiently large $i$,
\begin{equation}\label{eq:polynomial-bounds}
    \norm{\nabla_{A_i}^{q}F_{A_i}}_{C^0(K)}
    +\norm{\nabla_{A_i}^{q+1}\Phi_i}_{C^0(K)}
    \leq C_{K,q}m_i^{q+2}.
\end{equation}
In particular, on compact subsets of $M$, the covariant derivatives of
the algebraic tensors built from $F_{A_i}$, $\nabla_{A_i}\Phi_i$, and the
mass-normalized Higgs field $m_i^{-1}\Phi_i$ that occur below are bounded
by fixed powers of $m_i$. The same is true for $\Psi_i$ and for the
parallel and perpendicular projections determined by $\Psi_i$.
\end{lemma}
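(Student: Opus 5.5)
The bounds follow from a rescaling argument at the mass scale, combined with the mass-uniform $\varepsilon$-regularity of \cite{fadeloliveira2026limitv5}*{Theorem~5.1}, which is already used in the proof of Proposition~\ref{prop:local-clearing}.

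\emph{Step 1: rescaling.} Fix $K\Subset M$ and choose $K\Subset U_1\Subset V\Subset M$ as in that proof. For $x\in K$ and large $i$, the ball $B_{R/m_i}(x)$ lies in $V$. Its mass-renormalized energy is at most $\delta_i=\mu_i(V)\to0<\varepsilon_\Lambda$. Pull back by the exponential map at $x$ and dilate by $m_i$, setting
\[
    \widehat A_i:=s_i^*A_i,
    \qquad
    \widehat\Phi_i:=m_i^{-1}s_i^*\Phi_i,
    \qquad
    s_i(z)=\exp_x(z/m_i),
\]
on the Euclidean ball $B_R(0)$. The metric becomes $\widehat g_i=m_i^2 s_i^*g$, and its $C^j$ distance to the flat metric is $O(m_i^{-2})$, uniformly in $x\in K$ by bounded geometry of $\overline V$. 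The pair $(\widehat A_i,\widehat\Phi_i)$ is a monopole of mass one for $\widehat g_i$. Its energy on $B_R(0)$ equals $m_i^{-1}\mathscr E_{B_{R/m_i}(x)}(A_i,\Phi_i)<\varepsilon_\Lambda$, since the Bogomolny energy scales like $\text{length}^{-1}\cdot\text{field}^2$. The $\varepsilon$-regularity statement then gives $\sup_{B_{R/2}}\abs{F_{\widehat A_i}}\leq C$.

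\emph{Step 2: uniform higher derivatives.} Here I would use the monopole system in Coulomb/Uhlenbeck gauge on $B_{R/2}(0)$. Small curvature lets us write $\widehat A_i=d+a_i$ with $\abs{a_i}_{W^{1,p}}\leq C$. The Bogomolny equation, together with $d^*a_i=0$, is then an elliptic first-order system for $(a_i,\widehat\Phi_i)$ with coefficients smooth in $\widehat g_i$. We also have $\abs{\widehat\Phi_i}\leq1$. Bootstrapping gives $C^q$ bounds on $B_{R/4}$ for all $q$, uniform in $i$ and $x$. Gauge-invariantly this reads
\[
    \abs{\nabla^q_{\widehat A_i}F_{\widehat A_i}}+\abs{\nabla^{q+1}_{\widehat A_i}\widehat\Phi_i}\leq C_q
    \quad\text{at }0.
\]
This is the main technical step. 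The point to check is that the constants are uniform: this holds because both the metric perturbation and the energy are uniformly small. Alternatively, one can differentiate the Bochner inequality \eqref{eq:clearing-bochner} and apply the mean value inequality inductively at unit scale.

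\emph{Step 3: scaling back.} A $q$-th covariant derivative picks up a factor $m_i^{q}$. The curvature $F$ picks up $m_i^2$, and $\nabla\Phi=m_i\nabla\widehat\Phi$ picks up $m_i^2$ as well. Together these give $\abs{\nabla^q_{A_i}F_{A_i}}+\abs{\nabla^{q+1}_{A_i}\Phi_i}\leq C_qm_i^{q+2}$ at $x$, which is \eqref{eq:polynomial-bounds}. Since $\abs{\Phi_i}\geq m_i/2$ on $K$ by \eqref{eq:large-Higgs}, the map $\Phi\mapsto\Psi=\Phi/\abs\Phi$ is smooth with all derivatives bounded by powers of $\abs{\Phi_i}^{-1}\leq 2m_i^{-1}$. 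The Leibniz and chain rules then bound $\nabla^q\Psi_i$ by $Cm_i^{q}$. The projections $\xi\mapsto\ip{\xi}{\Psi_i}\Psi_i$ and their complements are polynomial in $\Psi_i$, so they are bounded in the same way. Any algebraic tensor built from $F$, $\nabla\Phi$, and $m_i^{-1}\Phi$ is polynomial in these quantities, so its covariant derivatives are bounded by fixed powers of $m_i$.
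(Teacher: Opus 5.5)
Your proposal is correct and follows essentially the same route as the paper: rescale the mass-scale balls $B_{R/m_i}(x)$ to mass-one monopoles of small energy, use the mass-uniform $\varepsilon$-regularity to get a uniform $C^0$ bound, bootstrap in an Uhlenbeck Coulomb gauge, scale back to obtain the factor $m_i^{q+2}$, and handle $\Psi_i$ and the projections using $|\Phi_i|\geq m_i/2$ from \eqref{eq:large-Higgs}. The only difference is cosmetic: you treat the Bogomolny equation together with $d^*a_i=0$ as a single first-order elliptic system, whereas the paper alternates the Hodge estimate for $a_i$ with the interior estimate for $\nabla_{d+a_i}^*\nabla_{d+a_i}\phi_i=0$; both bootstraps work.
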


\begin{proof}
Choose $K\Subset U\Subset M$. As in the first step of the proof of
Proposition~\ref{prop:local-clearing}, every ball at the mass scale
$B_{R/m_i}(x)$, $x\in U$, has mass-renormalized energy below the
small energy threshold for all large $i$. Rescale such a ball by $m_i$, write
$\phi_i:=m_i^{-1}\Phi_i$ for the mass-normalized Higgs field, and use the
metric $m_i^2g$. The result is a mass-one monopole on a ball of
fixed radius, with uniformly bounded geometry; its energy on the rescaled
ball is the corresponding mass-renormalized energy of the original pair
and is therefore uniformly small. The mass-uniform
$\varepsilon$-regularity estimate gives a uniform $C^0$ bound for the
rescaled energy density. By Uhlenbeck's local gauge theorem
\cite{uhlenbeck1982connections}*{Theorem~1.3}, put the rescaled connection in Coulomb
gauge, writing it as $d+a_i$. Then
\[
    d^*a_i=0,
    \qquad
    da_i=*\bigl(d\phi_i+[a_i,\phi_i]\bigr)-a_i\wedge a_i,
    \qquad
    \nabla_{d+a_i}^*\nabla_{d+a_i}\phi_i=0.
\]
The Hodge estimate for $a_i$ and the interior elliptic estimate for
$\phi_i$, applied alternately and then differentiated, give uniform
$C^q$ bounds on a smaller fixed ball for every $q$. Scaling back gives
\eqref{eq:polynomial-bounds}. The algebraic-tensor assertion follows by
the product rule and $\abs{\Phi_i}\leq m_i$. Finally, on compact subsets
of $M$ one has $\abs{\Phi_i}\geq m_i/2$ for all large $i$ by
\eqref{eq:large-Higgs}; differentiating
$\Psi_i=\Phi_i/\abs{\Phi_i}$ then gives polynomial bounds for
$\Psi_i$ and for the associated projections.
\end{proof}

\begin{lemma}[Coercive transverse inequality]\label{lem:transverse-Bochner}
Let $U\Subset M$. For all sufficiently large $i$, the transverse field
\[
    f_i:=[\Phi_i,\nabla_{A_i}\Phi_i]
\]
satisfies
\begin{equation}\label{eq:bochner-transverse}
    \frac12\Delta\abs{f_i}^2
    +c_0\abs{\Phi_i}^2\abs{f_i}^2
    +\abs{\nabla_{A_i}f_i}^2
    \leq C_U\bigl(1+\abs{du_i}\bigr)\abs{f_i}^2
\end{equation}
on $U$. Here $c_0>0$ is universal and $C_U$ depends only on the geometry
of $U$.
\end{lemma}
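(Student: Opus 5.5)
We will derive \eqref{eq:bochner-transverse} from a Weitzenböck computation for $f_i=[\Phi_i,\nabla_{A_i}\Phi_i]$, regarded as a $\mathfrak g_P$-valued $1$-form. Write $\Phi=\Phi_i$, $A=A_i$. Two facts are used throughout: $\nabla_A^*\nabla_A\Phi=0$, and the Bogomolny equation identifies $\nabla_A\Phi$ with $*F_A$. Together they give $d_A(\nabla_A\Phi)=[F_A,\Phi]$ and $d_A^*(\nabla_A\Phi)=0$.

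\textbf{Step 1: the first-order operators on $f$.} Differentiating $f=[\Phi,\nabla_A\Phi]$ gives
\[
d_A^*f=\pm\langle [\nabla_A\Phi,\nabla_A\Phi]\rangle=0,
\]
since $[\,\cdot\,,\cdot\,]$ contracted with the metric is symmetric in the form slots and antisymmetric in the Lie algebra. For the exterior derivative,
\[
d_Af=[\nabla_A\Phi\wedge\nabla_A\Phi]+[\Phi,[F_A,\Phi]].
\]
The first term is bilinear in $\nabla_A\Phi$. Splitting it into $\parallel$ and $\perp$ parts, the $\parallel\wedge\parallel$ part vanishes, so every surviving term contains a transverse factor. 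The transverse factor is $(\nabla_A\Phi)^\perp=-\abs{\Phi}^{-2}[\Phi,f]$, of size $\abs{f}/\abs\Phi$. The other factor has size $\le\abs{du_i}+\abs{(\nabla\Phi)^\perp}$. Hence
\[
\abs{[\nabla\Phi\wedge\nabla\Phi]}\le C\bigl(\abs{du_i}+e_i^{1/2}\bigr)\abs f/\abs\Phi .
\]
The second term equals $-\abs\Phi^2 (F_A)^\perp$, which by Bogomolny is $\pm\abs\Phi^2 *(\nabla\Phi)^\perp$ up to the Hodge star, i.e.\ of size comparable to $\abs\Phi\abs f$.

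\textbf{Step 2: Weitzenböck.} Apply
\[
\nabla_A^*\nabla_A f=(d_Ad_A^*+d_A^*d_A)f-\mathrm{Ric}(f)-[*F_A\lrcorner f].
\]
The dominant term of $d_A^*d_Af$ comes from $d_A^*$ of $[\Phi,[F_A,\Phi]]$. Its principal part is $[\Phi,[\Phi,d_A^*F_A]]$ plus commutator terms. Since $d_A^*F_A=\pm*d_A\nabla_A\Phi=\pm[\Phi,\nabla_A\Phi]$ up to sign conventions, this principal part equals $-\mathrm{ad}_\Phi^2 f$. Therefore
\[
\ip{d_A^*d_Af}{f}\ge \abs\Phi^2\abs f^2-(\text{error}).
\]
An alternative that avoids this expansion: compute $\nabla^*\nabla\Phi$-type identities directly via \cite{fadel2023asymptotics}*{Lemma~4.8}, which is exactly the source of the coercive term $c\abs\Phi^2\abs{(\nabla\Phi)^\perp}^2$ in \eqref{eq:clearing-bochner}. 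Either way, the error terms are:
\begin{itemize}[label=--]
\item the curvature terms $\mathrm{Ric}$ and $[F_A,f]$;
\item terms $[\nabla_A\Phi,[F_A,\Phi]]$ and $[\nabla\Phi,\nabla^2\Phi]$-type contractions.
\end{itemize}
By the parallel/transverse bookkeeping, each error is bounded by $C(1+\abs{du_i})\abs f^2$, or by $C e_i^{1/2}\abs{f}^2$, or by $C\abs{\nabla_A f}\,e_i^{1/2}\abs f/\abs\Phi$. Then use
\[
\tfrac12\Delta\abs f^2=\ip{\nabla^*\nabla f}{f}-\abs{\nabla f}^2 .
\]
Note the sign convention $\Delta=d^*d$: with it, the inequality obtained has $\tfrac12\Delta\abs f^2+\abs{\nabla f}^2+c\abs\Phi^2\abs f^2$ on the left-hand side.

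\textbf{Step 3: absorption.} This is the main obstacle. Every genuinely nonlinear error carries a factor $e_i^{1/2}$, or $\abs{F_A}$, or $\abs{(\nabla\Phi)^\perp}$, which is not uniformly bounded. However, Proposition~\ref{prop:local-clearing} gives $e_i^{1/2}=o(m_i^{1/2})$ on $U$, while \eqref{eq:large-Higgs} gives $\abs\Phi\ge m_i/2$. Hence terms of the form $Ce_i^{1/2}\abs f^2$ are absorbed into $\tfrac{c}{2}\abs\Phi^2\abs f^2$ once $i$ is large. The mixed terms are handled by Young's inequality, $\abs{\nabla f}\,e_i^{1/2}\abs f/\abs\Phi\le\tfrac12\abs{\nabla f}^2+C e_i\abs f^2/\abs\Phi^2$, and the last term is $o(1)\abs f^2$. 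The only term that cannot be absorbed in this way is the one linear in the longitudinal derivative $du_i$. It is kept on the right as $C_U\abs{du_i}\abs f^2$; by Proposition~\ref{prop:potential-convergence} it is in any case locally bounded. This yields \eqref{eq:bochner-transverse} with universal $c_0$ (coming from $\mathrm{ad}_\Phi^2$ on $\langle\Psi\rangle^\perp$, where it equals $-\abs\Phi^2$ in our normalization) after halving.
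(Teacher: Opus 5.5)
\textbf{There is a genuine gap in your Step~2: the error bookkeeping misses one term.}

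The paper does not redo the Weitzenb\"ock computation. It quotes the Bochner inequality \cite{fadel2023asymptotics}*{Lemma~4.8, (4.16)} and only rewrites its mixed terms, using $[F_{A_i},\Phi_i]=-*f_i$ and the fact that the parallel parts of $F_{A_i}$ and $\nabla_{A_i}\Phi_i$ have norm $\abs{du_i}$. A direct computation like yours is a legitimate alternative, but then every term has to be accounted for.

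Drop the index $i$ for the moment. The cross term $-2\sum_j[\nabla_j\Phi,\nabla_j\nabla_A\Phi]$ in $\nabla_A^*\nabla_Af$ is what $d_A^*[d_A\Phi\wedge d_A\Phi]$ produces in your route. It has a piece in which the first factor is transverse and the Hessian factor is longitudinal. Write $W:=(\nabla_A\Phi)^\perp$ and $h_{jk}:=\ip{\nabla_j\nabla_k\Phi}{\Psi}$, and use $[\Psi,W_j]=f_j/\abs\Phi$. Then this piece contributes
\[
    \frac{2}{\abs\Phi}\sum_{j,k}h_{jk}\ip{f_j}{f_k},
    \qquad
    h_{jk}=-\nabla^2_{jk}u-\abs{\Phi}^{-1}\ip{W_j}{W_k},
\]
to $\ip{\nabla_A^*\nabla_Af}{f}$. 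No other term cancels it: $d_A^*[\Phi,[F_A,\Phi]]$ involves $\nabla_AF_A$ only through $d_A^*F_A=\pm f$.

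You implicitly convert Hessians into $\abs{\nabla_Af}$ via $\nabla_Af=[\nabla_A\Phi,\nabla_A\Phi]+[\Phi,\nabla_A^2\Phi]$. That identity only controls the transverse part of $\nabla_A^2\Phi$, because $\operatorname{ad}_\Phi$ annihilates $\Psi$.

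Reinstating $i$, the term has size $\abs{\nabla^2u_i}\abs{f_i}^2/\abs{\Phi_i}$. This matches none of your three error forms. It also cannot be absorbed with the estimates you invoke:
\begin{itemize}
\item Lemma~\ref{lem:polynomial-bounds} gives only $\abs{\nabla_{A_i}^2\Phi_i}\le Cm_i^3$. That makes the term of order $m_i^2\abs{f_i}^2$, the same order as the coercive term, so taking $i$ large does not help.
\item Proposition~\ref{prop:potential-convergence} controls $u_i$ only in $C^{1,\alpha}$, so it says nothing about $\nabla^2u_i$.
\end{itemize}

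\textbf{The missing ingredient is $\sup_U\abs{\nabla^2u_i}=o(m_i^3)$.} One way to get it:
\begin{itemize}
\item On a slightly larger set $U'\Subset M$, $\Delta u_i=\abs{W}^2/\abs{\Phi_i}\to0$ in $C^0$, by Proposition~\ref{prop:local-clearing} and \eqref{eq:large-Higgs}.
\item Its $C^1$ norm is $O(m_i^N)$ for some fixed $N$, by Lemma~\ref{lem:polynomial-bounds}.
\item Interpolation then gives $\norm{\Delta u_i}_{C^{0,\alpha}(U')}=o(m_i^{N\alpha})$.
\item The interior Schauder estimate, together with the $C^0$ bound on $u_i$, gives $\abs{\nabla^2u_i}=o(m_i^3)$ on $U$ once $N\alpha\le3$.
\end{itemize}
Alternatively, the mass-scale rescalings in the proof of Lemma~\ref{lem:polynomial-bounds} have energy tending to zero and uniform $C^q$ bounds, so their Hessians tend to zero. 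With either estimate, the term is absorbed into $\tfrac{c_0}{2}\abs{\Phi_i}^2\abs{f_i}^2$ for large $i$, and your scheme closes.

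A smaller point: your Young absorption of the mixed terms $\abs{\nabla_{A_i}f_i}\,e_i^{1/2}\abs{f_i}/\abs{\Phi_i}$ leaves only a fraction of $\abs{\nabla_{A_i}f_i}^2$ on the left. ``Halving'' does not restore the stated coefficient one. This is harmless for Proposition~\ref{prop:transverse-decay}, where that term is discarded.
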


\begin{proof}
Apply \cite{fadel2023asymptotics}*{Lemma~4.8, (4.16)}. For a monopole,
$[F_{A_i},\Phi_i]=*[\nabla_{A_i}\Phi_i,\Phi_i]=-*f_i$, while the parallel
parts of $F_{A_i}$ and $\nabla_{A_i}\Phi_i$ both have norm $\abs{du_i}$ by
\eqref{eq:splitting-identities}. Every mixed term on the right hand side
of that inequality is therefore bounded by
$C_U(1+\abs{du_i})\abs{f_i}^2$. This gives
\eqref{eq:bochner-transverse}.
\end{proof}

\begin{lemma}[Interior massive decay]\label{lem:massive-decay}
Let $K\Subset U$ be relatively compact subsets of a Riemannian manifold
of bounded geometry. There are $c,C>0$ such that, whenever $\lambda\geq1$
and $h\in C^2(U)$ is nonnegative and satisfies
\[
    \Delta h\leq-\lambda^2h
    \qquad\text{on }U,
\]
one has
\begin{equation}\label{eq:massive-decay-abstract}
    \sup_Kh\leq Ce^{-c\lambda}\sup_Uh.
\end{equation}
\end{lemma}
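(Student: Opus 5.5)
We will prove the lemma with an explicit exponentially decaying supersolution and the maximum principle, using the sign convention $\Delta=d^*d$. The hypothesis $\Delta h\le-\lambda^2h$ says that $-d^*dh\ge\lambda^2h\ge0$, so $h$ is subharmonic and in fact strongly so.

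First we reduce to balls. Because $K$ is compact, we fix $r>0$, no larger than the injectivity and bounded-geometry radius, such that $B_{2r}(x)\subset U$ for every $x\in K$. It then suffices to prove
\[
h(x)\le Ce^{-c\lambda}\sup_{B_{2r}(x)}h
\]
for $x\in K$, with $c,C$ depending only on $r$ and the geometry.

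Next we build the comparison function. Set $s=d(x,\cdot)$ and take a radial function $w=\cosh(\mu s)$, or equivalently $e^{\mu(s-r)}$ smoothed near $s=0$. In geodesic polar coordinates the Laplacian of a radial function is
\[
-\Delta w=w''+H(s)w',
\]
where $H$ is the mean curvature of the geodesic spheres. Bounded geometry on $B_r(x)$ gives $|H-2/s|\le C_0$. For $w=\cosh(\mu s)$ we have $w'=\mu\sinh(\mu s)\ge0$, and
\[
w''+Hw'\le\mu^2\cosh(\mu s)+\mu\sinh(\mu s)\bigl(\tfrac2s+C_0\bigr)\le(3\mu^2+C_0\mu)\,w,
\]
using $\sinh(\mu s)/s\le\mu\cosh(\mu s)$. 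Choosing $\mu=\lambda/2$ and taking $\lambda\ge\lambda_0(C_0)$ gives $-\Delta w\le\lambda^2w$, so $w$ is a supersolution of the operator $-\Delta-\lambda^2$ in the appropriate sense. Values $1\le\lambda<\lambda_0$ are absorbed into the constant $C$, since $h$ is subharmonic and hence $\sup_K h\le\sup_U h$ trivially.

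Now we compare. Set
\[
\psi=h-\frac{\sup_{B_r}h}{\cosh(\mu r)}\,w
\]
on $B_r(x)$. Then $\psi\le0$ on $\partial B_r$, and $(-\Delta-\lambda^2)\psi\ge0$. At an interior positive maximum of $\psi$ we would have $-\Delta\psi\le0$ while $\lambda^2\psi>0$, which is a contradiction. Hence $\psi\le0$, and in particular
\[
h(x)\le\frac{\sup h}{\cosh(\lambda r/2)}\le2e^{-\lambda r/2}\sup_Uh.
\]
The only delicate point is the smoothness of $s$ at the centre, and $\cosh(\mu s)$ handles it: it is a smooth function of $s^2$, and the Laplacian comparison above holds there. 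The main step is the bounded-geometry control of $H$ on the ball. If one prefers to avoid the cut locus entirely, one can instead work in normal coordinates on $B_r(x)$ and use $w=\cosh(\mu|y|)$, whose Laplacian differs from the Euclidean one by terms bounded by $C_0(\mu^2|y|+\mu)w$. These are absorbed by shrinking $r$, and the rest of the argument is identical.
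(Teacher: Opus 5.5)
Your proof is correct and follows essentially the same route as the paper: the maximum principle applied against an exponentially decaying radial barrier on geodesic balls. The only difference is that you place the explicit barrier $\cosh(\lambda s/2)$ directly on a ball of fixed radius about each point of $K$. The paper instead first solves the Dirichlet problem $(\Delta+\lambda^2)w_\lambda=0$, $w_\lambda=1$ on $\partial\Omega$, and then bounds $w_\lambda$ by ball comparisons chained from $\partial\Omega$ to $K$. Your version is a harmless streamlining, since the lemma only requires some rate $c>0$, not one proportional to $\dist(K,\partial\Omega)$.
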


\begin{proof}
Choose a smooth domain $\Omega$ with
$K\Subset\Omega\Subset U$, and let $w_\lambda$ solve
\[
    (\Delta+\lambda^2)w_\lambda=0\quad\text{on }\Omega,
    \qquad w_\lambda=1\quad\text{on }\partial\Omega.
\]
The maximum principle applied to
$h-(\sup_\Omega h)w_\lambda$ gives
$h\leq(\sup_\Omega h)w_\lambda$. Comparison on balls in normal
coordinates with the corresponding positive radial solution of the
constant-coefficient Euclidean equation, followed by a finite chain of
balls from $\partial\Omega$ to $K$, gives
\[
    \sup_Kw_\lambda
    \leq C\exp\bigl(-c\lambda\dist(K,\partial\Omega)\bigr).
\]
The uniform geometry assumptions make the constants uniform. Absorbing
the fixed distance into $c$ proves
\eqref{eq:massive-decay-abstract}.
\end{proof}

\begin{proposition}[Exponential transverse decay]\label{prop:transverse-decay}
For every $K\Subset U\Subset M$ and every $j\geq0$, there exist
$c_{K,j},C_{K,j}>0$ such that
\begin{equation}\label{eq:exp-decay}
\begin{split}
    &\norm{(F_{A_i})^\perp}_{C^j(K)}
    +\norm{(\nabla_{A_i}\Phi_i)^\perp}_{C^j(K)}
    +m_i\norm{\nabla_{A_i}\Psi_i}_{C^j(K)}\\
    &\hspace{45mm}\leq C_{K,j}e^{-c_{K,j}m_i}
\end{split}
\end{equation}
for all sufficiently large $i$.
\end{proposition}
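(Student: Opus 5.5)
The plan is to apply Lemma~\ref{lem:massive-decay} to $h_i:=\abs{f_i}^2$, with $f_i=[\Phi_i,\nabla_{A_i}\Phi_i]$, and then recover all the transverse quantities from $f_i$. First, fix intermediate sets $K\Subset U_1\Subset U$ and combine the earlier results. By \eqref{eq:large-Higgs}, $\abs{\Phi_i}\geq m_i/2$ on $U$ for large $i$. By Proposition~\ref{prop:potential-convergence}, $\abs{du_i}$ is bounded on $U$ uniformly in $i$, since $u_i\to u$ in $C^{1,\alpha}$. Dropping the gradient term in Lemma~\ref{lem:transverse-Bochner} then gives
\[
    \tfrac12\Delta\abs{f_i}^2\leq\bigl(C_U(1+\sup_U\abs{du_i})-c_0m_i^2/4\bigr)\abs{f_i}^2\leq-\tfrac{c_0}{8}m_i^2\abs{f_i}^2
\]
for large $i$. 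This is exactly the hypothesis of Lemma~\ref{lem:massive-decay} with $\lambda=c\,m_i$. The lemma yields $\sup_{U_1}\abs{f_i}^2\leq Ce^{-cm_i}\sup_U\abs{f_i}^2$. By Lemma~\ref{lem:polynomial-bounds}, $\sup_U\abs{f_i}\leq m_i\cdot Cm_i^2$ is only polynomially large, and the polynomial factor is absorbed by shrinking $c$. Next, for a $\mathfrak{su}(2)$-valued form $\xi$ we have $\abs{[\Phi,\xi]}=c'\abs\Phi\abs{\xi^\perp}$ with a universal $c'$ depending on the inner product normalization. Hence $\abs{(\nabla_{A_i}\Phi_i)^\perp}\leq Cm_i^{-1}\abs{f_i}$, and this is exponentially small on $U_1$. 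The other two quantities follow from this one. The monopole equation gives $(F_{A_i})^\perp=*(\nabla_{A_i}\Phi_i)^\perp$. By \eqref{eq:splitting-identities}, $m_i\nabla_{A_i}\Psi_i=m_i\abs{\Phi_i}^{-1}(\nabla_{A_i}\Phi_i)^\perp$, and the prefactor $m_i\abs{\Phi_i}^{-1}$ is at most $2$. This settles the case $j=0$.

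For $j\geq1$, I would interpolate rather than redo the Bochner argument for higher derivatives. Write $T_i$ for any of the three transverse tensors. Lemma~\ref{lem:polynomial-bounds} bounds $\nabla_{A_i}^qT_i$ on $U_1$ by $C_qm_i^{N_q}$, because the projections onto $\langle\Psi_i\rangle^\perp$ have polynomially bounded derivatives. The standard interpolation inequality on a slightly smaller set, applied componentwise in local trivializations or using the Kato-type bound $\abs{d\abs T}\leq\abs{\nabla T}$ together with Gagliardo--Nirenberg-type estimates, gives
\[
    \norm{T_i}_{C^j(K)}\leq C\norm{T_i}_{C^0(U_1)}^{1/2}\norm{T_i}_{C^{2j}(U_1)}^{1/2}+C\norm{T_i}_{C^0(U_1)}.
\]
The right-hand side is at most $Ce^{-cm_i/2}m_i^{N}$, and this is again exponentially small once $c$ is halved. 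An alternative route is to use the gradient term $\abs{\nabla_{A_i}f_i}^2$ in \eqref{eq:bochner-transverse} to obtain $L^2$ smallness of $\nabla f_i$ and then bootstrap. However, interpolation against the polynomial bounds is cleaner, and I would use it.

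I expect the main obstacle to be the interpolation step for covariant derivatives of bundle-valued tensors with connections whose curvature is of size $m_i^2$. One has to be careful that the interpolation constants do not depend on the connection. To handle this, I would derive the inequality directly from Taylor expansion along geodesics of the scalar functions $\ip{\nabla^{q}T_i}{S}$ for parallel-transported frames $S$. The error terms then involve only $\sup\abs{\nabla^{q+1}T_i}$ together with curvature factors bounded polynomially in $m_i$. This is still harmless against the factor $e^{-cm_i}$.
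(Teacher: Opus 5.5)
Your proposal is correct and follows the paper's proof essentially step by step. It uses the same coercive inequality for $\abs{f_i}^2$, then Lemma~\ref{lem:massive-decay} with the polynomial prefactor absorbed by shrinking $c$, the $\mathfrak{su}(2)$ identity to recover the three transverse tensors, and interpolation against the polynomial bounds of Lemma~\ref{lem:polynomial-bounds} for higher derivatives. The only difference is in the interpolation step. The paper interpolates in the mass-scale Coulomb gauges, with an exponent $\theta(j)$ against $C^{2j+2}$ norms, while you use parallel frames along geodesics with exponent $1/2$ against $C^{2j}$ norms. In both versions the connection dependence costs only polynomial factors in $m_i$, which the exponential decay absorbs.
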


\begin{proof}
Choose $K\Subset U_0\Subset U$. Proposition~\ref{prop:potential-convergence}
gives
\[
    \abs{\Phi_i}\geq\frac{m_i}{2},
    \qquad
    \norm{du_i}_{C^0(U)}\leq C_U
\]
for all large $i$. Dropping the nonnegative derivative term in
\eqref{eq:bochner-transverse} and increasing $i$ once more, we obtain
\[
    \Delta\abs{f_i}^2
    \leq-cm_i^2\abs{f_i}^2
    \qquad\text{on }U.
\]
Lemma~\ref{lem:massive-decay}, followed by
Lemma~\ref{lem:polynomial-bounds}, yields
\begin{equation}\label{eq:f-C0-exp}
    \norm{f_i}_{C^0(U_0)}
    \leq C e^{-cm_i};
\end{equation}
indeed, the a priori factor $\sup_U\abs{f_i}$ is polynomial in $m_i$ and
is absorbed by decreasing $c$.

For $\mathfrak{su}(2)$ with the normalization used here,
\begin{equation}\label{eq:su2-transverse-algebra}
    \abs{[\Phi_i,\xi]}
    =\abs{\Phi_i}\abs{\xi^\perp}.
\end{equation}
Consequently, \eqref{eq:f-C0-exp} and
$\abs{\Phi_i}\geq m_i/2$ give the $C^0$ estimate for
$(\nabla_{A_i}\Phi_i)^\perp$; the monopole equation gives the same
estimate for $(F_{A_i})^\perp$, and
$(\nabla_{A_i}\Phi_i)^\perp=\abs{\Phi_i}\nabla_{A_i}\Psi_i$ gives the
claimed estimate for $m_i\nabla_{A_i}\Psi_i$.

It remains to obtain derivatives without losing the exponential rate.
For any fixed $j$, Lemma~\ref{lem:polynomial-bounds} and the product rule
give polynomial $C^{2j+2}(U_0)$ bounds for each of the three tensors in
\eqref{eq:exp-decay}. In the Coulomb gauges at the mass scale used in the proof
of that lemma, the local interpolation inequality, applied on
$K\Subset U_0$, has the form
\[
    \norm{T}_{C^j(K)}
    \leq C
       \norm{T}_{C^0(U_0)}^{\theta}
       \norm{T}_{C^{2j+2}(U_0)}^{1-\theta}
       +C\norm{T}_{C^0(U_0)}
\]
for some $\theta=\theta(j)>0$. Combining the exponential $C^0$ estimate
with the polynomial bound for higher derivatives, and decreasing the
exponent if necessary, proves \eqref{eq:exp-decay} for every $j$.
\end{proof}

\begin{corollary}[Smooth scalar and longitudinal convergence]
\label{cor:smooth-potential}
The convergences in \eqref{eq:v-limit}, \eqref{eq:u-limit}, and
\eqref{eq:longitudinal-convergence} hold in $C^\infty_{\loc}(M)$.
\end{corollary}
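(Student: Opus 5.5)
We upgrade the $C^{1,\alpha}$ convergence $v_i\to v$ from Proposition~\ref{prop:potential-convergence} to $C^\infty_{\loc}(M)$ by bootstrapping the Poisson equation $\Delta v_i=2m_i^{-1}e_i$ from \eqref{eq:v-potential}. The limit $v$ is harmonic on $M$. So the whole matter reduces to showing that the source $m_i^{-1}e_i$ tends to zero in $C^j(K)$ for every $j$ and every $K\Subset M$.

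For this we split the energy density using \eqref{eq:splitting-identities}:
\[
    e_i=\abs{\nabla_{A_i}\Phi_i}^2=\abs{du_i}^2+\abs{(\nabla_{A_i}\Phi_i)^\perp}^2 .
\]
By Proposition~\ref{prop:transverse-decay}, the transverse part is $O(e^{-cm_i})$ in $C^j(K)$; this already uses covariant derivatives, and since the quantity is gauge invariant, ordinary derivatives of its pointwise norm are controlled too. Thus
\[
    m_i^{-1}e_i=m_i^{-1}\abs{du_i}^2+O(e^{-cm_i})\quad\text{in } C^j(K).
\]
The key point is that the factor $m_i^{-1}$ makes the remaining term small as soon as $\abs{du_i}^2$ is bounded in $C^j$, and no rate is needed for that.

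We establish this boundedness by induction on the regularity of $u_i$, using
\[
    \Delta u_i=\frac{\abs{(\nabla_{A_i}\Phi_i)^\perp}^2}{\abs{\Phi_i}}
\]
from \eqref{eq:scalar-u}. Its right-hand side is exponentially small in every $C^j(K)$, by Proposition~\ref{prop:transverse-decay} combined with the polynomial bounds of Lemma~\ref{lem:polynomial-bounds} for $\abs{\Phi_i}^{-1}$ (recall $\abs{\Phi_i}\geq m_i/2$). Together with the uniform $C^{1,\alpha}$ bound on $u_i$ from \eqref{eq:u-limit}, interior Schauder estimates on nested domains $K\Subset U_j\Subset\dots\Subset U_0\Subset M$ give uniform $C^{j+1,\alpha}(K)$ bounds on $u_i$ for every $j$. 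The same argument applied to $u_i-u$ gives convergence directly: $u$ is harmonic on $M$, $u_i-u\to0$ in $C^{1,\alpha}$, and $\Delta(u_i-u)\to0$ in every $C^j$, so $u_i\to u$ in $C^\infty_{\loc}(M)$. This proves \eqref{eq:u-limit} in $C^\infty_{\loc}(M)$. We then return to $v_i$: we have $v_i=u_i(2-u_i/m_i)$, and $u_i$ is bounded in every $C^j$, so $v_i\to 2u=v$ smoothly. (Alternatively, one can feed the bounds on $du_i$ into the Poisson equation for $v_i$ as first described.) Finally, \eqref{eq:longitudinal-convergence} follows because $\widehat\omega_i=-*du_i$ is a linear first-order expression in $u_i$.

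The only point needing care is the gauge-invariance step. The transverse estimates of Proposition~\ref{prop:transverse-decay} are stated with covariant derivatives, so we must check that they control ordinary derivatives of the scalar $\abs{(\nabla_{A_i}\Phi_i)^\perp}^2/\abs{\Phi_i}$. This holds because the pointwise inner product is $\mathrm{Ad}$-invariant, so $d\ip{\xi}{\eta}=\ip{\nabla_{A_i}\xi}{\eta}+\ip{\xi}{\nabla_{A_i}\eta}$. Iterating this identity, each derivative of the scalar is a sum of products in which at least two factors are transverse, hence exponentially small, and every other factor is polynomially bounded by Lemma~\ref{lem:polynomial-bounds}.
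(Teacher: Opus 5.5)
Your proposal is correct and is essentially the paper's argument: interior Schauder estimates for $\Delta(u_i-u)=\abs{(\nabla_{A_i}\Phi_i)^\perp}^2/\abs{\Phi_i}$, whose right-hand side tends to zero in every $C^q$ by Proposition~\ref{prop:transverse-decay}. Smooth convergence of $v_i$ then follows from the algebraic relation $v_i=u_i(2-u_i/m_i)$, which is equivalent to the paper's use of \eqref{eq:u-from-v}, and that of $\widehat\omega_i=-*du_i$ follows directly. Your justification for passing from covariant derivatives to ordinary derivatives of the gauge-invariant scalar is a detail the paper leaves implicit, and the opening detour through the Poisson equation for $v_i$ is not needed.
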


\begin{proof}
Fix $K\Subset U_0\Subset U\Subset M$. The limit $u$ is harmonic on
$U$, whereas \eqref{eq:scalar-u} gives
\[
    \Delta(u_i-u)
    =r_i,
    \qquad
    r_i:=\frac{\abs{(\nabla_{A_i}\Phi_i)^\perp}^2}{\abs{\Phi_i}}.
\]
Proposition~\ref{prop:transverse-decay}, applied with
$\overline U_0\Subset U$, and $\abs{\Phi_i}\geq m_i/2$ show that $r_i\to0$ in
$C^q(U_0)$ for every $q$. The interior Schauder estimate therefore gives,
for every $q\geq0$ and $0<\alpha<1$,
\[
    \norm{u_i-u}_{C^{q+2,\alpha}(K)}
    \leq C
       \left(
\norm{u_i-u}_{C^0(U_0)}
             +\norm{r_i}_{C^{q,\alpha}(U_0)}\right)
    \longrightarrow0,
\]
where the $C^0$ convergence follows from
Proposition~\ref{prop:potential-convergence}. This proves smooth
convergence of $u_i$. Formula \eqref{eq:u-from-v} then gives smooth
convergence of $v_i$, and
$\widehat\omega_i=-*du_i$ gives smooth longitudinal convergence.
\end{proof}

\begin{corollary}[Original-scale local convergence]\label{cor:original-scale}
For every $K\Subset M$ and $j\geq0$,
\begin{equation}\label{eq:full-fields-converge}
\begin{split}
    F_{A_i}+*du_i\,\Psi_i&\longrightarrow0,\\
    \nabla_{A_i}\Phi_i+du_i\otimes\Psi_i&\longrightarrow0
\end{split}
\qquad\text{in }C^j(K).
\end{equation}
In particular, $F_{A_i}$ and $\nabla_{A_i}\Phi_i$ are uniformly bounded
in every $C^j(K)$.
\end{corollary}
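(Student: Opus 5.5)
The plan is to decompose each tensor into its parallel and transverse parts with respect to $\Psi_i$, and treat the two parts separately. The transverse part will be controlled by Proposition~\ref{prop:transverse-decay}, and the parallel part by the scalar identities \eqref{eq:splitting-identities}.

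For the curvature, write $F_{A_i}=\widehat\omega_i\Psi_i+(F_{A_i})^\perp$. By the third identity in \eqref{eq:splitting-identities}, $\widehat\omega_i=-*du_i$, so
\[
    F_{A_i}+*du_i\,\Psi_i=(F_{A_i})^\perp .
\]
For the Higgs derivative, the first two identities in \eqref{eq:splitting-identities} give
\[
    \nabla_{A_i}\Phi_i+du_i\otimes\Psi_i=\abs{\Phi_i}\nabla_{A_i}\Psi_i=(\nabla_{A_i}\Phi_i)^\perp .
\]
Thus both differences in \eqref{eq:full-fields-converge} are exactly transverse tensors.

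Proposition~\ref{prop:transverse-decay}, applied with some $K\Subset U\Subset M$, bounds their $C^j(K)$ norms by $C_{K,j}e^{-c_{K,j}m_i}$, which tends to $0$. Note that the $C^j$ norms here are taken with $\nabla_{A_i}$, as in the convention of Section~\ref{subsec:notation}. This proves \eqref{eq:full-fields-converge}.

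For the uniform bounds, it then suffices to bound $*du_i\,\Psi_i$ and $du_i\otimes\Psi_i$ in $C^j(K)$. I will expand $\nabla_{A_i}^q(du_i\otimes\Psi_i)$ by the Leibniz rule into terms of the form
\[
    \nabla^{q_1}du_i\otimes\nabla_{A_i}^{q_2}\Psi_i,\qquad q_1+q_2=q .
\]
The terms with $q_2=0$ are bounded because $u_i\to u$ in $C^\infty_{\loc}(M)$ by Corollary~\ref{cor:smooth-potential}, and $\abs{\Psi_i}=1$. Each term with $q_2\geq1$ contains $\nabla_{A_i}^{q_2-1}(\nabla_{A_i}\Psi_i)$. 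This factor is $O(m_i^{-1}e^{-cm_i})$ in $C^0(K)$ by Proposition~\ref{prop:transverse-decay}, so these terms tend to zero. The Hodge star commutes with the Levi--Civita part of the connection, so the curvature term is handled identically. Hence $F_{A_i}$ and $\nabla_{A_i}\Phi_i$ are uniformly bounded in every $C^j(K)$.

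The only point needing care is the Leibniz expansion, which requires $\Psi_i$ to have bounded derivatives. Exponential decay of $\nabla_{A_i}\Psi_i$ in every $C^j$ provides exactly this, so I expect no real obstacle.
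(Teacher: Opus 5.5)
Your proposal is correct and follows the paper's proof. In both, the two differences are identified with the transverse components $(F_{A_i})^\perp$ and $(\nabla_{A_i}\Phi_i)^\perp$, which Proposition~\ref{prop:transverse-decay} sends to zero, and the uniform bounds come from the smooth convergence of $u_i$ in Corollary~\ref{cor:smooth-potential}. Your Leibniz expansion, which uses the exponential $C^j$ decay of $\nabla_{A_i}\Psi_i$ to control the derivatives falling on $\Psi_i$, only makes explicit a step the paper leaves implicit.
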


\begin{proof}
The orthogonal decompositions are
\[
    \nabla_{A_i}\Phi_i
       =-du_i\otimes\Psi_i+(\nabla_{A_i}\Phi_i)^\perp,
    \qquad
    F_{A_i}
       =-*du_i\,\Psi_i+(F_{A_i})^\perp.
\]
The longitudinal factors $du_i$ converge smoothly by
Corollary~\ref{cor:smooth-potential}, while the transverse remainders tend
to zero in every $C^j$ by Proposition~\ref{prop:transverse-decay}. This
proves \eqref{eq:full-fields-converge} and the uniform local bounds.
\end{proof}

\begin{corollary}[Local energy convergence]\label{cor:local-energy}
On $M$ one has
\begin{equation}\label{eq:local-energy-convergence}
    \abs{F_{A_i}}^2\longrightarrow\abs{du}^2,
    \qquad
    \abs{\nabla_{A_i}\Phi_i}^2\longrightarrow\abs{du}^2
    \quad\text{in }C^\infty_{\loc}(M).
\end{equation}
In particular, no further bubbling occurs on compact subsets of
$M$ at the original scale; all bubbling at finite points is confined to $\cS$.
\end{corollary}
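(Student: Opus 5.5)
The statement to prove is Corollary~\ref{cor:local-energy}. The plan is to read it off directly from Corollary~\ref{cor:original-scale} and Corollary~\ref{cor:smooth-potential}.

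Fix $K\Subset M$ and $j\geq0$. Using the orthogonal decompositions from the proof of Corollary~\ref{cor:original-scale} and $\abs{\Psi_i}=1$, we can write
\[
    \abs{F_{A_i}}^2=\abs{du_i}^2+\abs{(F_{A_i})^\perp}^2,
    \qquad
    \abs{\nabla_{A_i}\Phi_i}^2=\abs{du_i}^2+\abs{(\nabla_{A_i}\Phi_i)^\perp}^2 .
\]
These are pointwise orthogonal splittings with respect to $\langle\Psi_i\rangle\oplus\langle\Psi_i\rangle^\perp$.

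\textbf{Longitudinal term.} The function $\abs{du_i}^2$ is built from the Riemannian metric alone. By Corollary~\ref{cor:smooth-potential}, $u_i\to u$ in $C^{j+1}$ on $K$, so $\abs{du_i}^2\to\abs{du}^2$ in $C^j(K)$.

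\textbf{Transverse term.} The function $\abs{(F_{A_i})^\perp}^2$ is gauge invariant. Its ordinary derivatives are computed with $\nabla_{A_i}$ through metric compatibility: $d\abs{T}^2=2\ip{\nabla_{A_i}T}{T}$. Iterating this with the Leibniz rule bounds its $C^j(K)$ norm by $C\norm{(F_{A_i})^\perp}_{C^j(K)}^2$. By Proposition~\ref{prop:transverse-decay}, this is at most $Ce^{-2c_{K,j}m_i}\to0$. The same argument applies to $(\nabla_{A_i}\Phi_i)^\perp$.

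Combining the two terms gives \eqref{eq:local-energy-convergence} in every $C^j(K)$, that is, in $C^\infty_{\loc}(M)$.

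\textbf{No bubbling.} The densities $e_i$ are uniformly bounded on compact subsets of $M$ and converge smoothly to the finite limit $\abs{du}^2$. Hence no energy concentrates at any point of $M$ at the original scale, and any bubbling at finite points must occur on $\cS$. This is consistent with the definition of $\cS$, and with Proposition~\ref{prop:local-clearing} at the mass-renormalized level.

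The only point needing care is that $C^j$ norms of scalar quantities built from bundle-valued tensors are controlled by the covariant $C^j$ norms used in Proposition~\ref{prop:transverse-decay}. This is routine via metric compatibility, so I expect no real obstacle.
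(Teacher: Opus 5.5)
Your proposal is correct and follows the paper's argument. You use the orthogonal splitting $\abs{F_{A_i}}^2=\abs{du_i}^2+\abs{(F_{A_i})^\perp}^2$ (and likewise for $\nabla_{A_i}\Phi_i$), smooth convergence of $u_i$ from Corollary~\ref{cor:smooth-potential}, exponential transverse decay from Proposition~\ref{prop:transverse-decay}, and uniform local bounds to rule out further bubbling; your metric-compatibility estimate $\norm{\abs{T}^2}_{C^j(K)}\leq C\norm{T}_{C^j(K)}^2$ just makes explicit a step the paper leaves implicit.
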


\begin{proof}
The parallel and perpendicular summands are orthogonal, so
\[
    \abs{F_{A_i}}^2
       =\abs{du_i}^2+\abs{(F_{A_i})^\perp}^2,
    \qquad
    \abs{\nabla_{A_i}\Phi_i}^2
       =\abs{du_i}^2
        +\abs{(\nabla_{A_i}\Phi_i)^\perp}^2.
\]
Corollary~\ref{cor:smooth-potential} and
Proposition~\ref{prop:transverse-decay} give
\eqref{eq:local-energy-convergence}. The uniform curvature bounds in
Corollary~\ref{cor:original-scale} exclude any further bubbling on a
compact subset of $M$.
\end{proof}

\begin{proposition}[Gauge compactness on the punctured manifold]
\label{prop:gauge-compactness}
There are a subsequence, a compact exhaustion $K_1\Subset K_2\Subset
\cdots\Subset M$, integers $i(j)$, gauge transformations $g_{i,j}$ on
$K_j$ for $i\geq i(j)$, and a smooth limiting triple
$(A_\infty,\Psi_\infty,\Phi_\infty)$ on $P|_M$ such that
\begin{equation}\label{eq:gauge-convergence}
    g_{i,j}^*(A_i,\Psi_i,\widetilde\Phi_i)
    \longrightarrow(A_\infty,\Psi_\infty,\Phi_\infty)
    \quad\text{in }C^\infty(K_j).
\end{equation}
The limit satisfies \eqref{eq:limit-main}.
\end{proposition}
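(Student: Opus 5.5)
We will prove Proposition~\ref{prop:gauge-compactness} by combining the uniform local bounds of Corollary~\ref{cor:original-scale} with Uhlenbeck's local gauge theorem and a diagonal patching argument. Fix a compact exhaustion $K_1\Subset K_2\Subset\cdots$ of $M$ by smooth compact domains. On each $K_{j+1}$, Corollary~\ref{cor:original-scale} gives bounds on $\norm{F_{A_i}}_{C^q(K_{j+1})}$ for every $q$, uniform in $i$. Since $\widetilde\Phi_i=-u_i\Psi_i$, we have $\nabla_{A_i}\widetilde\Phi_i=-du_i\otimes\Psi_i-u_i\nabla_{A_i}\Psi_i$. Proposition~\ref{prop:transverse-decay} and Corollary~\ref{cor:smooth-potential} then bound $\nabla_{A_i}\widetilde\Phi_i$ in every $C^q$, together with $\abs{\widetilde\Phi_i}=u_i$ and $\abs{\Psi_i}=1$.

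\emph{Local gauges.} Cover $K_{j+1}$ by finitely many small geodesic balls on which $\norm{F_{A_i}}_{L^\infty}$ is below Uhlenbeck's threshold. Apply \cite{uhlenbeck1982connections}*{Theorem~1.3} to put $A_i=d+a_i$ in Coulomb gauge on each ball. The equations $d^*a_i=0$ and $da_i+a_i\wedge a_i=F_{A_i}$, together with the bounds on $\nabla^q_{A_i}F_{A_i}$, give uniform $C^q$ bounds on $a_i$ by bootstrapping. The bounds on $\nabla_{A_i}^q\Psi_i$ and $\nabla_{A_i}^q\widetilde\Phi_i$ then convert into ordinary $C^q$ bounds on their local representatives. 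Arzel\`a--Ascoli gives convergent subsequences on each ball.

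\emph{Patching.} This is the main technical step. Transition functions between overlapping Coulomb gauges satisfy $dg=g\,a'-a\,g$, so they are bounded in every $C^q$ and subconverge. Use the standard Donaldson--Kronheimer patching procedure (Lemma~4.4.7 in their book), with a slight shrinking of the balls. This yields global gauge transformations $g_{i,j}$ on $K_j$ such that $g_{i,j}^*(A_i,\Psi_i,\widetilde\Phi_i)$ converges in $C^\infty(K_j)$. Diagonalizing over $j$ and arranging compatibility of limits by gauge-fixing consecutive limits against one another produces a smooth limiting triple $(A_\infty,\Psi_\infty,\Phi_\infty)$ on $P|_M$. This is the usual Uhlenbeck compactness on a noncompact exhaustion, where the uniform curvature bounds exclude bubbling.

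\emph{The limit equations.} Identities pass to the limit.
\begin{itemize}
\item Since $\abs{\Psi_i}=1$, the limit $\Psi_\infty$ is a unit section.
\item Since $m_i\nabla_{A_i}\Psi_i\to0$ in $C^j$ by Proposition~\ref{prop:transverse-decay}, we get $\nabla_{A_\infty}\Psi_\infty=0$.
\item Since $\widetilde\Phi_i=-u_i\Psi_i$ and $u_i\to u$ smoothly by Corollary~\ref{cor:smooth-potential}, which is gauge invariant, we get $\Phi_\infty=-u\Psi_\infty$.
\item Corollary~\ref{cor:original-scale} gives $F_{A_i}+*du_i\,\Psi_i\to0$, so $F_{A_\infty}=-*du\,\Psi_\infty$.
\end{itemize}
Finally, $\nabla_{A_\infty}\Phi_\infty=-du\otimes\Psi_\infty$, so $*\nabla_{A_\infty}\Phi_\infty=-*du\,\Psi_\infty=F_{A_\infty}$. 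Hence the limit is a monopole. It is reducible because it admits the parallel section $\Psi_\infty$.
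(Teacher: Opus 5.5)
Your proposal is correct and follows essentially the same route as the paper. Both arguments take Uhlenbeck Coulomb gauges on small balls using the uniform bounds of Corollary~\ref{cor:original-scale}, bootstrap via $d^*a_i=0$ and $da_i=F_{A_i}-a_i\wedge a_i$, make the transition functions converge and patch, diagonalize along the exhaustion, and pass $\widetilde\Phi_i=-u_i\Psi_i$, the transverse decay, and \eqref{eq:full-fields-converge} to the limit. Two details differ only cosmetically: Uhlenbeck's smallness hypothesis concerns the scale-invariant $L^{3/2}$ norm of the curvature, which your small balls together with the uniform $C^0$ curvature bound do provide, and the paper also notes that $\SU(2)$-bundles over $3$-manifolds are trivializable, which makes the identification with $P|_{K_j}$ immediate.
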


\begin{proof}
Choose a smooth compact exhaustion with
$K_j\Subset\operatorname{int}K_{j+1}$. Since $\SU(2)$ is
$2$-connected, obstruction theory shows that every principal
$\SU(2)$-bundle over a $3$-manifold is trivializable; in particular, all limiting bundle identifications below
may be made with the fixed bundle $P|_M$. Fix $j$. By
Corollary~\ref{cor:original-scale}, the curvature and all its covariant
derivatives are uniformly bounded on a neighbourhood of $K_j$. Choose a
finite cover by geodesic balls $B_{2r}(x_\alpha)$ so small that
\[
    \sup_{i\geq i(j)}
    \norm{F_{A_i}}_{L^{3/2}(B_{2r}(x_\alpha))}
    <\varepsilon_{\mathrm U},
\]
where $\varepsilon_{\mathrm U}$ is Uhlenbeck's small-curvature constant from
\cite{uhlenbeck1982connections}*{Theorem~1.3}. After applying local gauge
transformations, the connection forms
$a_{i,\alpha}$ satisfy the Coulomb condition and the uniform estimates
\[
    d^*a_{i,\alpha}=0,
    \qquad
    \norm{a_{i,\alpha}}_{W^{1,p}(B_{2r})}
    \leq C_p\norm{F_{A_i}}_{L^p(B_{2r})}
\]
for every finite $p$. The equation
$da_{i,\alpha}=F_{A_i}-a_{i,\alpha}\wedge a_{i,\alpha}$, the Coulomb
condition, and the covariant derivative bounds from
Corollary~\ref{cor:original-scale} bootstrap these estimates to uniform
$C^q$ bounds on $B_r(x_\alpha)$ for every $q$. Thus, after passing to a
subsequence, the local connection forms converge smoothly.

On overlaps, the transition gauges solve the first order equation
\[
    dg_{i,\alpha\beta}
      =g_{i,\alpha\beta}a_{i,\beta}
       -a_{i,\alpha}g_{i,\alpha\beta}.
\]
They consequently have uniform $C^q$ bounds and, after a further
subsequence, converge smoothly to transition functions for a limiting
bundle and connection over $K_j$. Standard Uhlenbeck patching identifies
this limiting bundle, after choosing bundle isomorphisms, with the fixed
bundle $P|_{K_j}$. Equivalently, because principal $\SU(2)$-bundles over
$3$-manifolds are trivializable, one may make all these identifications
inside $P|_{K_j}$. This gives gauge transformations on the fixed bundle
and smooth convergence of $A_i$ on $K_j$.

In these gauges, the identities
\[
    d\Psi_i=\nabla_{A_i}\Psi_i-[a_i,\Psi_i],
    \qquad
    \widetilde\Phi_i=-u_i\Psi_i
\]
combine with Proposition~\ref{prop:transverse-decay} and
Corollary~\ref{cor:smooth-potential} to give uniform $C^q$ bounds and
smooth convergence of $\Psi_i$ and $\widetilde\Phi_i$. Their limits
satisfy $\abs{\Psi_\infty}=1$ and
\[
    \nabla_{A_\infty}\Psi_\infty=0,
    \qquad
    \Phi_\infty=-u\Psi_\infty.
\]
Passing to the limit in the first identity of
\eqref{eq:full-fields-converge} gives
$F_{A_\infty}=-*du\,\Psi_\infty$. It follows also that
\[
    \nabla_{A_\infty}\Phi_\infty=-du\otimes\Psi_\infty,
    \qquad
    F_{A_\infty}=*\nabla_{A_\infty}\Phi_\infty,
\]
so the limit is a reducible monopole.

Repeating the construction along the exhaustion and taking a diagonal
subsequence gives convergence on every $K_j$. The limiting configurations
on $K_j$ and $K_{j+1}$ are related on $K_j$ by a smooth limiting
comparison gauge, obtained as the limit of the comparison gauges for the
sequence. Using these comparison gauges inductively identifies the local
limits with a single smooth limiting configuration on $P|_M$. No
compatibility of the sequence gauges themselves is required. This proves
\eqref{eq:gauge-convergence}.
\end{proof}

\begin{remark}[Flat ambiguity]\label{rmk:flat-ambiguity}
The concentration measure canonically determines the scalar potential
$u$ and hence the scalar longitudinal curvature $-*du$. Once a parallel
$\U(1)$-reduction and its unit generator $\Psi_\infty$ are fixed, the
adjoint-valued curvature is $-*du\,\Psi_\infty$. Two reducible
connections on this fixed reduction with the same curvature differ by a
closed real $1$-form in the parallel direction. Modulo $\U(1)$ gauge
transformations, the remaining freedom is the corresponding flat abelian
holonomy class. A holonomy normalization, or a hypothesis eliminating
this freedom, is therefore necessary for uniqueness of the limiting
connection without passing to subsequences.
\end{remark}

\section{Dirac singularities and charge at infinity}
\label{sec:singularities}

We now identify the singularity at each point of $\cS$. By the local
expansion of the Green function,
\begin{equation}\label{eq:u-local-expansion}
    u=4\pi K_aG(\,\cdot\,,x_a)
      +4\pi\sum_{b\neq a}K_bG(\,\cdot\,,x_b)
      =\frac{K_a}{r_a}+O(1),
\end{equation}
and the differentiated Green expansion gives
\begin{equation}\label{eq:F-local-expansion}
    -*du=K_a*\frac{dr_a}{r_a^2}+O(1).
\end{equation}

\begin{proposition}[Identification of the singular charges]
\label{prop:singular-charges}
For all sufficiently small $r>0$,
\begin{equation}\label{eq:flux-charge}
    \frac1{4\pi}\int_{\partial B_r(x_a)}\ip{F_{A_\infty}}{\Psi_\infty}
    =K_a.
\end{equation}
Thus the abelian reduction of $(A_\infty,\Phi_\infty)$ has a Dirac
singularity of charge $K_a$ at $x_a$.
\end{proposition}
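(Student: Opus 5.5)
The plan is to compute the flux directly from the formula $F_{A_\infty}=-*du\,\Psi_\infty$ in \eqref{eq:limit-main}, which together with $\abs{\Psi_\infty}=1$ gives
\[
    \ip{F_{A_\infty}}{\Psi_\infty}=-*du
    \qquad\text{on }M.
\]
The claim then reduces to a scalar statement about the Green potential $u=4\pi\sum_bK_bG(\,\cdot\,,x_b)$: for small $r$ (smaller than half the minimal distance between distinct points of $\cS$, and below the injectivity radius at $x_a$), we must show $-\frac1{4\pi}\int_{\partial B_r(x_a)}*du=K_a$.

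Next we split $u=4\pi K_aG(\,\cdot\,,x_a)+h_a$, where $h_a:=4\pi\sum_{b\neq a}K_bG(\,\cdot\,,x_b)$ is smooth and harmonic on $\overline B_r(x_a)$. By Stokes, $\int_{\partial B_r}*dh_a=\int_{B_r}d*dh_a=\pm\int_{B_r}(\Delta h_a)\vol_g=0$. For the singular part we use the distributional identity $\Delta_xG(\,\cdot\,,x_a)=\delta_{x_a}$ with $\Delta=d^*d=-{*d*d}$ on functions. Apply Stokes on the annulus $B_r\setminus B_s$, where $G$ is harmonic, to see that $\int_{\partial B_r}*dG$ is independent of $r$. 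Then let $s\downarrow0$ and use the differentiated expansion \eqref{eq:F-local-expansion}, that is $dG=-\frac{dr_a}{4\pi r_a^2}+O(1)$ near $x_a$, together with $\mathrm{Area}(\partial B_s)=4\pi s^2(1+O(s^2))$ in normal coordinates. This gives $\int_{\partial B_s}*dG\to-1$, with $\partial B_s$ oriented as the boundary of the ball by the outward normal. Hence $-\frac1{4\pi}\int_{\partial B_r}*du=-\frac{1}{4\pi}\cdot4\pi K_a\cdot(-1)=K_a$, which is \eqref{eq:flux-charge}. The Dirac-singularity statement then follows by combining this flux with \eqref{eq:u-local-expansion}: $u\sim K_a/r_a$ and $\abs{F_{A_\infty}}=\abs{du}\sim K_a/r_a^2$. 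These are exactly the local model of a charge-$K_a$ Dirac monopole in the parallel $\U(1)$ reduction.

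The main obstacle is a sign and normalization check rather than an analytic difficulty. We must make sure that the orientation of $\partial B_r$, the convention $\Delta=d^*d$ (positive Laplacian, so $G\sim+1/(4\pi r)$), and the sign in $\widehat\omega=-*du$ combine to $+K_a$ and not $-K_a$. I would verify this on flat $\mathbb R^3$ with $u=K/r$. There $-*du=K\,*(dr/r^2)=K\,\vol_{\mathbb S^2}$ on spheres, and it integrates to $4\pi K$. A secondary point is justifying that the $O(1)$ error term in the differentiated Green expansion contributes $O(s^2)\to0$ to the flux. This follows because that term is bounded while the area of $\partial B_s$ is $O(s^2)$.
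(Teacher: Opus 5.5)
Your proposal is correct and follows the paper's route. You reduce to $\ip{F_{A_\infty}}{\Psi_\infty}=-*du$ and compute the flux of the Green potential. Your annulus argument together with the expansion $dG=-dr_a/(4\pi r_a^2)+O(1)$ is an explicit justification of the step the paper compresses into ``Stokes' theorem applied to'' $\Delta u=4\pi\sum_bK_b\delta_{x_b}$. The paper adds one point you leave implicit: it reads this flux as the Chern--Weil degree of the eigenline bundle of the parallel $\U(1)$-reduction on $\partial B_r(x_a)$, which is what makes the charge $K_a$ a topological quantity rather than just the coefficient in $u\sim K_a/r_a$.
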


\begin{proof}
The definition of $u$ and $\Delta_xG(x,y)=\delta_y$ give the distributional
identity
\begin{equation}\label{eq:distribution-u}
    \Delta u=4\pi\sum_{b=1}^{\ell}K_b\delta_{x_b}
    \qquad\text{on }X.
\end{equation}
Choose $r$ so that $B_r(x_a)$ contains no other point of $\cS$. Since
$\Delta=d^*d$ is the positive Laplacian, Stokes' theorem applied to
\eqref{eq:distribution-u} gives
\[
    4\pi K_a
    =\int_{B_r(x_a)}\Delta u
    =-\int_{\partial B_r(x_a)}*du.
\]
By Proposition~\ref{prop:gauge-compactness},
$\ip{F_{A_\infty}}{\Psi_\infty}=-*du$, and
\eqref{eq:flux-charge} follows.

Because $\Psi_\infty$ is parallel, it reduces the structure group to its
centralizer $\U(1)$. In the associated rank-two complex vector bundle
$P\times_{\SU(2)}\mathbb C^2$, let $L$ be the eigenline selected by the
sign convention consistent with \eqref{eq:bogomolny}. Under the normalization
$\ip{a}{b}=-2\tr(ab)$, the Chern--Weil degree of $L$ on
$\partial B_r(x_a)$ is precisely the left hand side of
\eqref{eq:flux-charge}. It is therefore $K_a$, which is the defining
charge of the Dirac singularity. The asymptotic expansions
\eqref{eq:u-local-expansion}--\eqref{eq:F-local-expansion} give the usual
Dirac model explicitly.
\end{proof}

\begin{proof}[Proof of Theorem~\ref{thm:main}]
The absence of zeros on compact subsets is Lemma~\ref{lem:zero-free}. Proposition
\ref{prop:potential-convergence} and Corollary
\ref{cor:smooth-potential} prove the scalar convergence in part
\textup{(i)}. Proposition~\ref{prop:transverse-decay} is part
\textup{(ii)}. Proposition~\ref{prop:gauge-compactness} gives the gauges,
the reducible limit, and all equations in part \textup{(iii)}. Finally,
Proposition~\ref{prop:singular-charges} and
\eqref{eq:u-local-expansion}--\eqref{eq:F-local-expansion} prove part
\textup{(iv)}. The final statement concerning the connection follows
from Remark~\ref{rmk:flat-ambiguity}.
\end{proof}

The same Green potential identifies the charge represented by the
limiting field at infinity. For sufficiently large regular values $R$ of
the radius function, set
\[
    X_R:=\{x\in X:\rho(x)<R\},
    \qquad
    \partial X_R=\Sigma_R,
\]
and define
\[
    K_{\mathrm{fin}}:=\sum_{a=1}^{\ell}K_a,
    \qquad
    K_\infty:=k-K_{\mathrm{fin}}\in\mathbb Z_{\geqslant0}.
\]
With $\Sigma_R$ oriented as $\partial X_R$,
\eqref{eq:distribution-u} and
$\ip{F_{A_\infty}}{\Psi_\infty}=-*du$ give, for all sufficiently
large regular $R$,
\begin{equation}\label{eq:residual-charge-infinity}
    \frac1{4\pi}
    \int_{\Sigma_R}\ip{F_{A_\infty}}{\Psi_\infty}
    =K_{\mathrm{fin}}.
\end{equation}
Thus the residual abelian monopole has asymptotic charge
$K_{\mathrm{fin}}$, while $K_\infty$ is the charge escaping through the
AC end. By symmetry of $G$ and \eqref{eq:Green-infinity},
\begin{equation}\label{eq:u-infinity}
    u=\frac{4\pi K_{\mathrm{fin}}}{\Vol(\Sigma)}\rho^{-1}
      +O(\rho^{-1-\eta}).
\end{equation}
For every sufficiently large regular value $R$, the domain $X_R$
contains $\cS$, and hence $\mu(\partial X_R)=0$. Weak convergence
therefore gives
\[
    \lim_{i\to\infty}\mu_i(X_R)
    =\mu(X_R)=4\pi K_{\mathrm{fin}}.
\]
Using \eqref{eq:total-mass-mu}, we obtain
\begin{equation}\label{eq:escaped-charge}
    \lim_{R\to\infty}\lim_{i\to\infty}\mu_i(X\setminus X_R)
    =4\pi K_\infty.
\end{equation}

\begin{corollary}[No mass-renormalized energy loss at infinity]\label{cor:no-loss}
The following are equivalent:
\begin{enumerate}[label=\textnormal{(\roman*)}]
\item $\sum_a K_a=k$;
\item the measures $\mu_i$ are tight;
\item no mass-renormalized energy escapes to infinity;
\item the coefficient of $\rho^{-1}$ in \eqref{eq:u-infinity} equals
the coefficient $4\pi k/\Vol(\Sigma)$ in the asymptotic expansion of
the Higgs defect $m_i-\abs{\Phi_i}$ of the original charge-$k$
monopoles.
\end{enumerate}
\end{corollary}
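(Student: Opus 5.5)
The plan is to prove the cycle (i)$\Rightarrow$(iii)$\Rightarrow$(ii)$\Rightarrow$(i), and then to show (i)$\Leftrightarrow$(iv) separately by comparing coefficients.

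\emph{(i)$\Leftrightarrow$(iii).} The key identity is \eqref{eq:escaped-charge}. I interpret ``no mass-renormalized energy escapes to infinity'' as the vanishing of the iterated limit $\lim_{R\to\infty}\lim_{i\to\infty}\mu_i(X\setminus X_R)$. By \eqref{eq:escaped-charge} this limit equals $4\pi K_\infty=4\pi(k-K_{\mathrm{fin}})$. Hence it vanishes exactly when $\sum_aK_a=k$.

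\emph{(iii)$\Rightarrow$(ii).} Tightness means that for every $\varepsilon>0$ there is a compact set outside of which $\mu_i<\varepsilon$ for all $i$. From (iii), I choose $R$ so that $\limsup_i\mu_i(X\setminus X_R)<\varepsilon$; this holds for all $i\geq i_0$. For the finitely many indices $i<i_0$, each $\mu_i$ is a finite measure, so each is individually tight. I then enlarge $R$ to cover them as well.

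\emph{(ii)$\Rightarrow$(i).} Tightness combined with weak-$*$ convergence against $C_c^0$ upgrades the convergence to narrow convergence, because no mass is lost. Hence $\mu(X)=\lim_i\mu_i(X)=4\pi k$. Since $\mu=4\pi\sum_aK_a\delta_{x_a}$ by \eqref{eq:measure-intro}, this gives $\sum_aK_a=k$. Concretely, with $\varepsilon$ and $R$ as above, $\mu_i(X_R)\geq4\pi k-\varepsilon$, and the limit of $\mu_i(X_R)$ computed before \eqref{eq:escaped-charge} is $4\pi K_{\mathrm{fin}}$.

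\emph{(iv).} I first need the asymptotic expansion of $m_i-\abs{\Phi_i}$ for a single charge-$k$ monopole. Starting from \eqref{eq:Green-representation}, the identity $m^2-\abs\Phi^2=2\int G\abs{\nabla\Phi}^2$, the asymptotics \eqref{eq:Green-infinity}, and the total energy $4\pi mk$ give $m^2-\abs{\Phi}^2\sim 2\cdot4\pi mk\,\Vol(\Sigma)^{-1}\rho^{-1}$. Dividing by $m+\abs\Phi\sim2m$ yields $m-\abs\Phi\sim 4\pi k\Vol(\Sigma)^{-1}\rho^{-1}$. This expansion is presumably stated in \cite{fadel2023asymptotics}, and I would cite or rederive it. Making it rigorous requires justifying that $\int G(x,y)\,d\mu(y)$ has the asymptotics of $G$ times the total mass. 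For this I split the integral into $y$ in a compact set, where \eqref{eq:Green-infinity} applies uniformly, and a tail controlled by the finite-energy decay.

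Comparing with \eqref{eq:u-infinity}, the coefficient $4\pi K_{\mathrm{fin}}/\Vol(\Sigma)$ equals $4\pi k/\Vol(\Sigma)$ iff $K_{\mathrm{fin}}=k$, which is (i).

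The main obstacle is the expansion of the Higgs defect of a single monopole at infinity, that is, controlling the tail of the Green integral. Everything else is bookkeeping with \eqref{eq:escaped-charge}.
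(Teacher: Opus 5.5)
Your proposal is correct and is essentially the paper's proof. As you do, the paper reads off (iii) from \eqref{eq:escaped-charge}, shows tightness is equivalent to $\mu_i(X_R)\to4\pi k$ (equivalently $\mu(X)=\lim_i\mu_i(X)=4\pi k$), handling the finitely many remaining indices by enlarging $R$, and proves (iv) by comparing coefficients with \eqref{eq:u-infinity}; it merely orders the implications as (i)$\Leftrightarrow$(ii) and then (iii). The other difference is that the paper cites the single-monopole expansion $m-\abs{\Phi}=\frac{4\pi k}{\Vol(\Sigma)}\rho^{-1}+\cdots$ from \cite{fadel2023asymptotics}*{Corollary~4.14} rather than rederiving it; if you do rederive it, the tail of the Green integral needs a quantitative pointwise decay rate for $\abs{\nabla_A\Phi}^2$, not merely finite energy, because $G(x,\cdot)$ is singular at $x$.
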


\begin{proof}
If $K_{\mathrm{fin}}=k$, then the weak limit and each $\mu_i$ have the
same total measure $4\pi k$. Let $R$ be so large that $\cS\subset X_R$.
By weak convergence and $\mu(\partial X_R)=0$,
$\mu_i(X_R)\to4\pi k$; hence $\mu_i(X\setminus X_R)$ is uniformly small for
large $i$, and enlarging $R$ deals with the finitely many remaining
indices. Thus the family is tight. Conversely, tightness and weak
convergence imply convergence of the total measures, so
$4\pi K_{\mathrm{fin}}=\mu(X)=\lim_i\mu_i(X)=4\pi k$. This proves the
equivalence of \textup{(i)} and \textup{(ii)}. Statement \textup{(iii)}
is precisely the vanishing of the right hand side of
\eqref{eq:escaped-charge}, and is therefore equivalent to them.

By \eqref{eq:u-infinity}, the coefficient of $\rho^{-1}$ in the residual
potential is $4\pi K_{\mathrm{fin}}/\Vol(\Sigma)$. On the other hand,
\cite{fadel2023asymptotics}*{Corollary~4.14} gives the coefficient
$4\pi k/\Vol(\Sigma)$ for every original monopole of charge $k$. The two
coefficients agree exactly when $K_{\mathrm{fin}}=k$, proving the
remaining equivalence.
\end{proof}

\begin{remark}
The order of the limits in \eqref{eq:escaped-charge} is essential. The
finite set $\cS$ records concentration at finite points of $X$; it does
not record monopole cores whose centres escape along the AC end.
\end{remark}

\section{Further questions}\label{sec:further-questions}

We conclude with questions concerning uniqueness, charge loss,
compactification of moduli spaces, and extensions of the AC gluing picture.
The examples of \cite{fadel2019limit}*{Section~3} clarify their scope:
on $\mathbb R^3$ they realize both a single Euclidean profile of
arbitrary charge and charge loss through cores escaping to infinity;
the corrected and expanded arXiv version
\cite{fadeloliveira2026limitv5}*{Section~3.5} additionally exhibits
clusters at the same point with several separation scales. Oliveira's construction realizes the stratum of well-separated
charge-one monopoles without charge loss on AC $3$-manifolds with
$b_2(X)=0$.

\begin{problem}[Global uniqueness]
Determine the flat abelian holonomy selected by a large mass sequence and
give geometric or topological hypotheses under which the entire sequence,
rather than a subsequence, converges to the same residual Dirac monopole.
\end{problem}

\begin{problem}[Charge conservation]
The Taubes examples show that the general AC hypotheses do not prevent
charge from escaping to infinity. Find natural restrictions on the
geometry, or intrinsic centring or compactness hypotheses on a large mass
sequence, which force
$\sum_a K_a=k$, or equivalently the tightness of the mass-renormalized
energy measures.
\end{problem}

\begin{problem}[Compactification of moduli spaces]
Construct a compactification of moduli spaces of monopoles of charge $k$
which combines the residual Dirac monopole, the finite clusters at the
mass scale over $\cS$, and the charge escaping through the AC end. Determine the
Euclidean profiles, their relative positions and phases, and the hierarchy
of divergent separation scales in mass-rescaled coordinates. The
intermediate levels of this hierarchy encode relative separations and
should not be interpreted as secondary Euclidean bubbles.
Identify also the data on the AC end needed to record escaping clusters,
and relate the resulting boundary strata to the metric compactifications of
\cite{kottke2015partial}.
\end{problem}

\begin{problem}[Extensions and completeness of the AC gluing picture]
Oliveira's construction \cite{oliveira2016monopoles}*{Theorem~1} realizes
the reverse process when $b_2(X)=0$, the cores are well-separated
charge-one monopoles, and no charge escapes through the end. Extend it to
compatible Dirac backgrounds for $b_2(X)\neq0$ and to prescribed
zero-cycles carrying finite clusters of mass-one monopoles, together with
their relative phases and separation hierarchies, while allowing
additional clusters to escape through the end. Determine the compatibility
among the prescribed finite-point Dirac charges, the asymptotic abelian
charge on the AC end, and the topology of the abelian reduction. On the
stratum of separated charge-one monopoles,
prove an inverse gluing theorem deciding whether every sufficiently large
mass monopole with no charge loss lies in Oliveira's gluing family. Such a
result would give a boundary chart for the
compactification suggested by Theorems~\ref{thm:concentration-clusters}
and~\ref{thm:main}.
\end{problem}

\subsection*{Acknowledgements}

I thank Gon\c{c}alo Oliveira for many important mathematical discussions
that motivated this continuation of our earlier joint work.

This work was supported by the MATH-AmSud project
\emph{Symmetries in Geometry and Physics} (SGP 24-MATH-12) and by CNPq
Universal grant no.~406666/2023-7. It also received support from the
ANR--FAPESP project \emph{BRIDGES---Brazil--France Interplays in Gauge
Theory, Extremal Structures and Stability} (ANR grant ANR-21-CE40-0017;
FAPESP grant 2021/04065-6), and from the University of S\~ao Paulo through
its \emph{Programa de Apoio aos Novos Docentes USP}.

\bibliographystyle{amsra}
\bibliography{references}

\end{document}